\documentclass[12pt]{amsart}
\usepackage{ifthen,verbatim}
\usepackage{floatflt,graphicx,graphics}

\usepackage{tikz} 
\usepackage{mathrsfs}
\usepackage{amsmath,amssymb,amsthm}
\usepackage{mathtools}
\usepackage{multirow}

\numberwithin{equation}{section}
\setlength{\textwidth}{16cm}
\setlength{\oddsidemargin}{0cm}
\nonstopmode
\setlength{\evensidemargin}{0cm}
\setlength{\footskip}{40pt}

\numberwithin{equation}{section}
\theoremstyle{plain}
\newtheorem{theorem}[equation]{Theorem}
\newtheorem{thm}[equation]{Theorem}
\newtheorem{conjecture}[equation]{Conjecture}
\newtheorem{lemma}[equation]{Lemma}
\newtheorem{corollary}[equation]{Corollary}

\newtheorem{prop}[equation]{Proposition}

\theoremstyle{definition}

\newtheorem{remark}[equation]{Remark}
\newtheorem{nonsec}[equation]{}

\theoremstyle{remark}

\newcommand{\R}{\mathbb{R}}

\newcommand{\C}{\mathbb{C}}

\newcommand{\B}{\mathbb{B}}



\newcounter{alphabet}

\newcounter{minutes}\setcounter{minutes}{\time}
\divide\time by 60
\newcounter{hours}\setcounter{hours}{\time}
\multiply\time by 60
\addtocounter{minutes}{-\time}

\begin{document}
\bibliographystyle{amsplain}
\title
{
Collinearity of points on Poincaré unit disk and Riemann sphere
}

\def\thefootnote{}
\footnotetext{
\texttt{\tiny File:~\jobname .tex,
          printed: \number\year-\number\month-\number\day,
          \thehours.\ifnum\theminutes<10{0}\fi\theminutes}
}
\makeatletter\def\thefootnote{\@arabic\c@footnote}\makeatother

\author[M. Fujimura]{Masayo Fujimura}
\author[O. Rainio]{Oona Rainio}
\author[M. Vuorinen]{Matti Vuorinen}

\keywords{Chordal metric, elliptic geometry, Gröbner bases, hyperbolic geometry, stereographic projection}
\subjclass[2010]{Primary 51M10, Secondary 51M09}
\begin{abstract}
We study certain points significant for the hyperbolic geometry of the unit disk. We give explicit formulas for the intersection points of the Euclidean lines and the stereographic projections of great circles of the Riemann sphere passing through these points. We prove several results related to collinearity of these intersection points, offer new ways to find the hyperbolic midpoint, and represent a formula for the chordal midpoint. The proofs utilize Gr\"obner bases from computer algebra for the solution of polynomial equations.
\end{abstract}
\maketitle

\noindent \textbf{Author information.}\\
Masayo Fujimura$^1$, email: \texttt{masayo@nda.ac.jp}, ORCID: 0000-0002-5837-8167\\
Oona Rainio$^2$, email: \texttt{ormrai@utu.fi}, ORCID: 0000-0002-7775-7656\\
Matti Vuorinen$^2$, email: \texttt{vuorinen@utu.fi}, ORCID: 0000-0002-1734-8228\\
1: Department of Mathematics, National Defense Academy of Japan, Yokosuka, Japan\\
2: Department of Mathematics and Statistics, University of Turku, FI-20014 Turku, Finland\\
\textbf{Funding.} The first author was partially supported by JSPS KAKENHI (Grant number: JP19K03531) and the second author was supported by Finnish Culture Foundation and Magnus Ehrnrooth Foundation.\\
\textbf{Data availability statement.} Not applicable, no new data was generated.\\
\textbf{Conflict of interest statement.} There is no conflict of interest.\\
\textbf{Acknowledgements.} We are grateful to the referees for their careful work and, in particular, the second part of Remark \ref{rmk_3.9} was suggested by one of the referees. We are thankful to Heikki Ruskeep\"a\"a for his help with Figure \ref{fig_cmp}.

\section{Introduction}

In recent decades, hyperbolic geometry and its many generalizations have found numerous applications to geometric function theory and analysis in Euclidean spaces $\mathbb{R}^n, n\ge 2,$ and in metric spaces \cite{hkv}. One of the reasons why the hyperbolic geometry is better suited to function theoretic applications than the Euclidean metric is its natural invariance properties under conformal mappings and M\"obius transformations. Conformal invariants play an important role in geometric function theory and potential theory.

Introduction to hyperbolic geometry is given for instance in \cite{b,ber,p01,r}. Many classical facts of Euclidean geometry have their counterparts in the hyperbolic geometry, usually in substantially different form, for instance the sum of angles of a triangle is smaller than $\pi.$ A systematic compilation
of trigonometric formulas for the three geometries: Euclidean, hyperbolic, and spherical geometries can be found in \cite{j}. Yet it is difficult to find explicit formulas even for simple geometric constructions such as the point of intersection of two hyperbolic lines, the hyperbolic midpoint of two points or the distance of a point from a hyperbolic line.
See also \cite{f,h}. 

For any two distinct arbitrary points in the unit disk non-collinear with the origin, we can create a group of six points by including the original points, the end points of the hyperbolic line passing through the two points, and their image points after an inversion in the unit circle. With these six points, we can define several intersection points by using such Euclidean lines that pass through certain pair of points chosen among them.
In the article \cite{vw1}, it was proven that five of these intersection points occur on the Euclidean line passing the origin and the hyperbolic midpoint of the original two points in the unit disk. An explicit formula for this hyperbolic midpoint, recently found in \cite{wvz}, see Theorem \ref{myhmidp} below,
has an important role in our work. 
  
We continue here the earlier work by finding explicit formulas for the five collinear intersection points. This problem is difficult to solve either by hand calculations or by using symbolic computation systems alone, but a combination of these two methods produces results when applied correctly. During the course of this work, we have used a symbolic computation program called Risa/Asir \cite{risa} to perform substitution calculations and reduction of fractions so that the resulting formulas are simple enough to be of use. These formulas of the intersection points are summarized in Table \ref{t1} in Section 3, which can be understood better by looking at the related geometry shown in Figure \ref{fig:kstuvw}. We also studied the same problem in the spherical geometry by computing the similar formulas for the stereographic projections of intersection points of the great circle that pass through the same points used to define Euclidean lines. These intersection points are also collinear, as proven in Theorem \ref{thm_5pointschordal}, and their definitions and formulas are presented in Figure \ref{fig_gcp} and Table \ref{tgc}.

The structure of this article is as follows. In Section 3, we give explicit formulas for the intersection points defined with Euclidean lines. In Section 4, we study the stereographic projections of intersection points of the great circles and prove that they are collinear not only with each other but also the earlier intersection points. In Section 5, we present several results related to the hyperbolic midpoint and also give a formula for the chordal midpoint of two points in the unit disk. We also formulate a conjecture about hyperbolic distances of certain intersection points.

\section{Preliminaries}

Define the complex conjugate of the point $z$ in the complex plane $\C$ as $\overline{z}={\rm Re}(z)-{\rm Im}(z)i$. Denote the $n$-dimensional unit ball by $\B^n$ and the unit sphere by $S^{n-1}$. The Euclidean line $L[a,b]$ passing through $a,b$ ($ a\neq b$) 
is given by
\begin{equation}
(\overline{a}-\overline{b})z-(a-b)\overline{z}
      =\overline{a}b-a\overline{b}.    
\end{equation}
The reflection of a point $x$ in the line $L[a,b]$ is given by 
\begin{equation}\label{myrefl}
w(x) = \frac{a-b}{ \overline{a}- \overline{b}}\, \overline{x} -\frac{a \overline{b}- \overline{a}b}{ \overline{a} -  \overline{b}}.
\end{equation}    
If the two lines $ L[a,b] $ and $ L[c,d] $ are non-parallel, their unique intersection point is given by
\begin{equation}\label{eq:lis}
  {\rm LIS}[a,b,c,d]=
   \frac{(\overline{a}b-a\overline{b})(c-d)
          -(\overline{c}d-c\overline{d})(a-b)}
        {(\overline{a}-\overline{b})(c-d)
          -(\overline{c}-\overline{d})(a-b)}.
\end{equation}
This formula can be found in Exercise 4.3(1) on p.\,57 and the solution on p.\,373 of \cite{hkv}.

\begin{prop}\label{LISprop}
Let $a,b \in \mathbb{C}$ with $|a|\neq |b|, |a||b| \neq 1.$ Then
\begin{align*}
(1)& \quad \quad {\rm LIS}[a,b, -1/\overline{a}, -1/\overline{b}]= 
\frac{b(1+|a|^2)-a(1+|b|^2)}{|a|^2 -|b|^2},\\
(2)& \quad \quad {\rm LIS}[a,b, 1/\overline{a}, 1/\overline{b}]= 
\frac{a(1-|b|^2)-b( 1-|a|^2)}{|a|^2 -|b|^2},\\
(3)& \quad \quad {\rm LIS}[a, 1/\overline{b},b, 1/\overline{a}]= 
\frac{a(1-|b|^2)+b(1-|a|^2)}{1-|a|^2 |b|^2}\, ,\\
(4)& \quad \quad {\rm LIS}[a, -1/\overline{b},b, -1/\overline{a}]= 
\frac{a(1+|b|^2)+b(1+|a|^2)}{1-|a|^2 |b|^2}\,.    
\end{align*}
\end{prop}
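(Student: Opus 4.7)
All four formulas follow by direct substitution into the line-intersection identity (2.3), followed by a simplification in which a common purely-imaginary factor cancels between numerator and denominator. I would verify part~(1) in full and indicate that parts~(2)--(4) are analogous.

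For part~(1), I substitute $c=-1/\overline{a}$, $d=-1/\overline{b}$, so that $\overline{c}=-1/a$ and $\overline{d}=-1/b$. Two observations make the simplification transparent: the differences satisfy $c-d=(\overline{a}-\overline{b})/(\overline{a}\overline{b})$ and $\overline{c}-\overline{d}=(a-b)/(ab)$, so they are proportional to the corresponding differences for $a,b$; and the cross-term works out to $\overline{c}d-c\overline{d}=(\overline{a}b-a\overline{b})/(|a|^2|b|^2)$, exhibiting the same purely imaginary quantity $\overline{a}b-a\overline{b}$ that already appears on the $(a,b)$-side of (2.3). With these, the numerator of (2.3) factors as $\frac{\overline{a}b-a\overline{b}}{|a|^2|b|^2}\bigl[ab(\overline{a}-\overline{b})-(a-b)\bigr]$, and using $a\overline{a}=|a|^2$, $b\overline{b}=|b|^2$, the bracketed expression collapses to $b(1+|a|^2)-a(1+|b|^2)$. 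After extracting the same prefactor $1/(|a|^2|b|^2)$, the denominator of (2.3) becomes $(\overline{a}-\overline{b})^2 ab-(a-b)^2\overline{a}\overline{b}$; the middle terms $\pm 2|a|^2|b|^2$ cancel, and the remaining four monomials regroup as $(|a|^2-|b|^2)(\overline{a}b-a\overline{b})$. The common factor $(\overline{a}b-a\overline{b})/(|a|^2|b|^2)$ then cancels out of the ratio, and the hypothesis $|a|\neq|b|$ ensures the denominator is nonzero, giving (1).

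Parts~(2)--(4) follow by the same method. Part~(2) differs from part~(1) only by sign reversals in the inversion; the simplification is essentially identical and produces $a(1-|b|^2)-b(1-|a|^2)$ over the same denominator $|a|^2-|b|^2$. Parts~(3) and~(4) are the ``crossed'' configurations in which $a$ is paired with the inversion of $b$ and vice versa; the analogous cancellation of a common factor leaves the denominator $1-|a|^2|b|^2$, which the hypothesis $|a||b|\neq 1$ keeps nonzero. The main obstacle throughout is purely bookkeeping---the conceptual step is to anticipate that $\overline{a}b-a\overline{b}$ is the common factor to be extracted; once this is spotted, each identity reduces to a short computation, of the sort the authors note can be done with a symbolic algebra system such as Risa/Asir.
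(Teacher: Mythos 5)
Your proposal is correct and takes essentially the same approach as the paper, whose proof of this proposition is simply the one-line remark that it follows from the line-intersection formula \eqref{eq:lis}; your explicit verification of part (1), with the extraction and cancellation of the common factor $(\overline{a}b-a\overline{b})/(|a|^2|b|^2)$, checks out. (The cancellation tacitly uses $\overline{a}b\neq a\overline{b}$, but when that quantity vanishes the two lines coincide and ${\rm LIS}$ is undefined anyway, so nothing is lost.)
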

\begin{proof}
The proof follows from \eqref{eq:lis}.
\end{proof}

\begin{prop}\label{UCLISprop}
Let $a,b \in \mathbb{B}^2$ be points non-collinear with $0$ and
$c={\rm LIS}[a,b,0, i(a-b)].$ Then $L[a,b]\cap S^1 =\{a_1,b_1\}$
where
$$  a_1=c-i \frac{c}{|c|}\sqrt{1-|c|^2}, \quad b_1=c+i \frac{c}{|c|}\sqrt{1-|c|^2},$$
and $a_1,a,b,b_1$ are ordered in such a way that $|a_1-a|<|a_1-b|.$
\end{prop}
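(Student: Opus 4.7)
The plan is to give a geometric interpretation of $c$ and then exploit an orthogonality to parametrise the chord. Since multiplication by $i$ is rotation by $\pi/2$, the Euclidean line through $0$ with direction $i(a-b)$ is perpendicular to $L[a,b]$, so the intersection point $c={\rm LIS}[a,b,0,i(a-b)]$ is the foot of the perpendicular dropped from the origin onto $L[a,b]$. The non-collinearity of $\{0,a,b\}$ ensures that $L[a,b]$ does not pass through $0$, hence $c\neq 0$ and the unit vector $c/|c|$ is well defined; moreover, since $a$ lies on $L[a,b]$ and $c$ realises the minimum distance from $0$ to this line, $|c|\le |a|<1$, so $L[a,b]$ really does meet $S^1$ in two distinct points.

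Next I parametrise $L[a,b]$ as $z(t)=c+t\cdot ic/|c|$ for $t\in\R$. This is legitimate because $ic/|c|$ has modulus $1$ and is orthogonal to $c$ (multiplication by $i$ is a quarter turn), and therefore lies along $L[a,b]$, which is perpendicular to the radial direction at $c$. Imposing $|z(t)|^2=1$ and expanding via $|u+v|^2=|u|^2+2\,{\rm Re}(u\overline{v})+|v|^2$ gives
\begin{equation*}
1=|c|^2+2t\,{\rm Re}\!\left(c\cdot\overline{ic/|c|}\right)+t^2=|c|^2+t^2,
\end{equation*}
because $c\cdot\overline{ic/|c|}=-i|c|$ is purely imaginary and the cross-term vanishes. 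Hence $t=\pm\sqrt{1-|c|^2}$, and substitution yields exactly the two claimed intersection points $c\pm i\tfrac{c}{|c|}\sqrt{1-|c|^2}$.

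Finally, the inequality $|a_1-a|<|a_1-b|$ is merely a labelling convention that selects which of these two endpoints is to be called $a_1$ and which $b_1$: since $a\ne b$ are distinct interior points of the chord $[a_1,b_1]$ (both having modulus strictly less than $1$), exactly one of the two endpoints of the chord is strictly closer to $a$ than to $b$, which removes any ambiguity. I do not expect any genuine obstacle in this proof; the only substantive computation is the orthogonality identity ${\rm Re}(c\cdot\overline{ic})=0$, a one-line check, and the rest is geometric bookkeeping.
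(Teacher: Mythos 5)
Your proof is correct. The paper itself offers no argument beyond the one-line remark that the proposition ``follows from \eqref{eq:lis}'', so there is little to compare against; your write-up supplies exactly the reasoning that remark leaves implicit. The key observations — that $L[0,i(a-b)]$ is perpendicular to $L[a,b]$ so that $c$ is the foot of the perpendicular from the origin, that non-collinearity with $0$ gives $c\neq 0$ while $|c|\le|a|<1$ guarantees two distinct intersection points with $S^1$, and that the Pythagorean relation $|c|^2+t^2=1$ along the unit direction $ic/|c|$ produces the stated endpoints — are all sound, and the orthogonality computation ${\rm Re}\bigl(c\,\overline{ic/|c|}\bigr)=0$ is verified correctly. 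Your reading of the final clause as a labelling convention (choose $a_1$ to be the endpoint strictly closer to $a$, which is well defined since neither endpoint can be the midpoint of $[a,b]$) is the right one, since the sign in front of the square root is not otherwise determined by the data. If anything, your argument is more informative than the paper's, which would presumably verify the same facts by direct substitution into the intersection formula.
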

\begin{proof}
The proof follows from \eqref{eq:lis}.
\end{proof}

\begin{prop}\cite[4., p.13]{p01}\label{prop_conjIntF}
If $a,b,c,d$ are four distinct complex points on the unit circle $S^1$, then the complex conjugate $\overline{f}$ of the intersection point $f$ of the lines $L[a,c]$ and $L[b,d]$ is
\begin{align*}
\overline{f}=\frac{a+c-b-d}{ac-bd},
\end{align*}
assuming that these two lines intersect.
\end{prop}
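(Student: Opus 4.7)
The plan is to exploit the fact that, for a point $a$ on $S^1$, $\overline{a}=1/a$, which turns the general line equation from the preliminaries into a very clean form for chords of the unit circle. Writing the equation of the line $L[a,c]$ from the first displayed equation of Section 2, substituting $\overline{a}=1/a$ and $\overline{c}=1/c$, and clearing denominators gives the chord equation
\begin{equation*}
z + ac\,\overline{z} = a+c.
\end{equation*}
The same substitution applied to $L[b,d]$ yields $z + bd\,\overline{z} = b+d$.

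Next, I would observe that the intersection point $f$ satisfies both chord equations simultaneously. Subtracting the second equation from the first eliminates the variable $z$ entirely, leaving a single linear equation in $\overline{z}$:
\begin{equation*}
(ac-bd)\,\overline{f} = (a+c)-(b+d).
\end{equation*}
Since the two lines are assumed to intersect in a unique point (they are not parallel), the coefficient $ac-bd$ is nonzero, so I can divide through to obtain $\overline{f}=(a+c-b-d)/(ac-bd)$, which is the claimed formula.

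The only subtlety worth flagging is the derivation of the chord equation itself: one should verify it either by direct substitution ($z=a$ gives $a+ac/a=a+c$, and similarly for $z=c$) or by reducing the general line equation using $\overline{a}=1/a$ and multiplying numerator and denominator by $ac$. I would include this short verification so that the argument is self-contained. After that, the calculation is essentially a one-line elimination, so I do not anticipate any genuine obstacle; the main point is simply to recognize that the unit-circle hypothesis reduces the generic LIS formula \eqref{eq:lis} to an almost trivial linear system in $z$ and $\overline{z}$.
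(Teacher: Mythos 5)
Your proof is correct. It rests on the same key observation as the paper's one-line proof---that $\overline{z}=1/z$ on $S^1$ collapses the line equations into a tractable form---but the execution differs: the paper simply substitutes $\overline{a}=1/a$, etc.\ into the closed-form intersection formula \eqref{eq:lis} and conjugates the result, whereas you first normalize each chord equation to $z+ac\,\overline{z}=a+c$ and $z+bd\,\overline{z}=b+d$ and then eliminate $z$ by subtraction, which hands you $\overline{f}$ directly without ever writing down $f$ itself. Your route is more self-contained and arguably more illuminating, since it makes visible \emph{why} the answer has the shape $(a+c-b-d)/(ac-bd)$ and why the denominator vanishing corresponds exactly to the chords being parallel (for chords of $S^1$ the coefficient of $\overline{z}$ in the normalized equation determines the direction, so $ac=bd$ iff the lines are parallel---a point you correctly invoke and which is worth the half-sentence of justification). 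The paper's version buys brevity by leaning on \eqref{eq:lis}, at the cost of a slightly messier simplification that it leaves to the reader. Your suggestion to verify the chord equation by plugging in $z=a$ and $z=c$ is a good one and closes the only loose end; with that included the argument is complete.
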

\begin{proof}
Because $\overline{z}=1/z$ for $z\in\{a,b,c,d\}$, the proof follows from \eqref{eq:lis}.
\end{proof}

Let $C[a,b,c]$  be the  circle  through distinct non-collinear points $\,a,\,b,\, c\,.$
The formula \eqref{eq:lis} gives easily the formula for the center $m(a,b,c)$ of $C[a,b,c]\,.$
For instance, we can find two points on the bisecting normal to the side $[a,b]$ and another
two points on the bisecting normal to the side $[a,c]$ and then apply \eqref{eq:lis}  to get
$m(a,b,c)\,.$
In this way we see that the center $m(a,b,c)$ of  $C[a,b,c]$  is
\begin{equation}\label{mfun}
m(a,b,c)=\frac{ |a|^2(b-c) +  |b|^2(c-a) +  |c|^2(a-b) }
{a(\overline{c}-\overline{b}) +b(\overline{a}-\overline{c})+ c(\overline{b}-\overline{a})}\,.
\end{equation}
The following special cases of  \eqref{mfun} will be very useful:
\begin{equation}\label{geoLines}
\begin{cases}
{\displaystyle
{m(a,1/\overline{a},b)=  (-a + b + a b (\overline{a} - \overline{b}))/d }}\,;\quad d=b \overline{a}- a \overline{b},&\\
{\displaystyle {m(a,-1/\overline{a},b) = (a - b + a b (\overline{a} -\overline{b}))/d\,.}}&
\end{cases}
\end{equation}

A \emph{M\"obius transformation} is a mapping of the form
$$z \mapsto \frac{az+b}{cz+d}\,, \quad a,b,c,d,z \in {\mathbb C}\,,\quad ad-bc\neq 0\,.$$ The most important feature of M\"obius transformations is that they always preserve the angle magnitude and, because of this, they map every Euclidean line or circle onto either a line or a circle. The special M\"obius transformation
\begin{equation}\label{myT}
T_a(z) = \frac{z-a}{1- \overline{a}z}\,, \quad a \in \B^2\setminus \{0\}\,,
\end{equation}
maps the unit disk $\mathbb{B}^2$ onto itself with $T_a(a) =0$, $T_a( \pm a/|a|)= \pm a/|a|$.

In the extended real space $\overline{\R}^n$, the {\it spherical (chordal) metric} $q$ is defined as \cite[(3.6), p. 29]{hkv}
\begin{equation}\label{1.15.}
  \begin{cases}
     {\displaystyle
			q(x,y)=\frac{|x-y|}{\sqrt{1+|x|^2}\;\sqrt{1+|y|^2}}}\;;\,\;x\ne\infty\ne y\;,&\\
	{\displaystyle 
	q(x,\infty)=\frac{1}{\sqrt{1+|x|^2}}}\;\,.&
  \end{cases}
\end{equation}
By using this metric, we can define the \emph{absolute ratio} of any four distinct points $a,b,c,d\in\overline{\R}^n$ as \cite[(3.10), p. 33]{hkv}, \cite{b}
\begin{align*}
|a,b,c,d|=\frac{q(a,c)q(b,d)}{q(a,b)q(c,d)},
\end{align*}
where the spherical distances can be replaced with the Euclidean distances if and only if $\infty\notin\{a,b,c,d\}$. The absolute ratio is a very useful tool in complex analysis because it is invariant under M\"obius transformations, which means that $|a,b,c,d|=|h(a),h(b),h(c),h(d)|$ for any M\"obius transformation $h$.

Next, let us recall some basic formulas and notation for hyperbolic geometry from \cite{b}. The hyperbolic metric $\rho_{\B^n}$ in the unit ball $\B^n$ is defined by
\begin{equation}\label{myrho}
{\rm sh}\frac{\rho_{\B^n}(x,y)}{2}= \frac{|x-y|}{\sqrt{(1-|x|^2)(1-|y|^2)}}\,,
\end{equation}
where sh stands for the hyperbolic sine function. Correspondingly, we denote the hyperbolic cosine and tangent by ch and th.
In the two-dimensional case, the formula \eqref{myrho} is equivalent to
\begin{align}
{\rm th}\frac{\rho_{\B^2}(x,y)}{2}=\frac{|x-y|}{|1-x\overline{y}|}=\frac{|x-y|}{A[x,y]}, 
\end{align}
where $A[x,y]$ is the Ahlfors bracket defined as \cite[7.37]{avv} 
\begin{equation}\label{myahl}
A[x,y]=|1-x\overline{y}|=\sqrt{|x-y|^2+(1-|x|^2)(1-|y|^2)}.
\end{equation}

The hyperbolic line through $a,b\in\B^2$ is denoted by $J^*[a,b]$. If $a,b$ are collinear with the origin, the line $J^*[a,b]$ is a diameter of the unit disk and, otherwise, an arc of the circle that passes through $a,b$ and is orthogonal to the unit circle. For $a,b\in\B^2\setminus\{0\}$, let 
\begin{equation}\label{epdef}
ep(a,b)=T_{-b}(T_b(a)/|T_b(a)|).
\end{equation}
This formula defines the end points on the unit circle for the hyperbolic line $J^*[a,b]$. The hyperbolic metric also satisfies
\begin{equation}\label{epdefrho}
\rho_{\B^2}(x,y)=\log |ep(a,b),a,b,ep(b,a)|,
\end{equation}
which shows, in particular, that the hyperbolic metric is invariant under M\"obius transformations of the unit disk onto itself. In fact, the hyperbolic metric is invariant under all conformal mappings and the M\"obius transformations are one subclass of conformal mappings.

\begin{thm}\label{myhmidp}\cite[Thm 1.4, p. 126]{wvz}
For given $x,y\in\B^2$, the hyperbolic midpoint $z\in\B^2$ with
$\rho_{\B^2}(x,z)=\rho_{\B^2}(y,z)=\rho_{\B^2}(x,y)/2$ is given by
\begin{equation}\label{myzformua}
z=\frac{y(1-|x|^2) + x(1-|y|^2)}{1-|x|^2|y|^2 + A[x,y] \sqrt{(1-|x|^2)(1-|y|^2)}}.
\end{equation}
\end{thm}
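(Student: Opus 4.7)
My plan is to exploit the M\"obius invariance of the hyperbolic metric to move one endpoint to the origin, compute the midpoint along a Euclidean diameter, and pull the answer back to $\B^2$. The map $T_x$ from \eqref{myT} is a hyperbolic isometry with $T_x(x)=0$, so setting $w=T_x(y)=(y-x)/(1-\overline{x}y)$, the hyperbolic midpoint $z$ of $x,y$ corresponds under $T_x$ to the hyperbolic midpoint $m$ of $0$ and $w$. I will recover $z$ as $T_{-x}(m)=(m+x)/(1+\overline{x}m)$.

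Since the geodesic from $0$ to $w$ is a Euclidean radius, $m=tw$ for some $t\in(0,1)$. Using \eqref{myrho}, the equation $\rho_{\B^2}(0,tw)=\rho_{\B^2}(tw,w)$ becomes
$$t|w|=\frac{(1-t)|w|}{1-t|w|^2},$$
which yields the quadratic $|w|^2t^2-2t+1=0$; after rationalising, the root in $(0,1)$ is $t=1/(1+\sqrt{1-|w|^2})$. The Ahlfors identity \eqref{myahl} then gives $\sqrt{1-|w|^2}=\sqrt{(1-|x|^2)(1-|y|^2)}/A[x,y]$, so $m$ is explicit in terms of $x,y$.

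Plugging $m$ into $(m+x)/(1+\overline{x}m)$ and collecting terms (using $x-|x|^2y=x(1-\overline{x}y)$ to merge the ``$A[x,y]$'' and ``$\sqrt{\ }$'' contributions) gives an intermediate formula
$$z=\frac{y\,A[x,y](1-|x|^2)+x(1-\overline{x}y)\sqrt{(1-|x|^2)(1-|y|^2)}}{A[x,y](1-|x|^2)+(1-\overline{x}y)\sqrt{(1-|x|^2)(1-|y|^2)}}.$$

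The main obstacle is the final rationalisation: the intermediate denominator is complex-valued, whereas the target denominator in \eqref{myzformua} is real. The plan is to multiply numerator and denominator by $1-x\overline{y}=\overline{1-\overline{x}y}$ and use $(1-\overline{x}y)(1-x\overline{y})=A[x,y]^2$ to absorb one factor, then cancel a common $A[x,y]$. Cross-multiplying the resulting expression against \eqref{myzformua} reduces to a polynomial identity which splits by powers of $A[x,y]$: the $A^2$-part matches via $(1-\overline{x}y)(1-x\overline{y})=A[x,y]^2$, the $A^0$-part matches via the identity $(1-|x|^2)+(1-|y|^2)-(1-|x|^2)(1-|y|^2)=1-|x|^2|y|^2$, and the $A^1$-parts cancel pairwise thanks once more to $x-|x|^2y=x(1-\overline{x}y)$ together with $y\overline{y}=|y|^2$. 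This verifies \eqref{myzformua}.
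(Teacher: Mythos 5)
The paper does not actually prove Theorem \ref{myhmidp}: it is imported verbatim from \cite[Thm 1.4]{wvz}, so there is no internal proof to compare against. Your normalization argument --- conjugating by the isometry $T_x$, locating the midpoint $tw$ of $0$ and $w=T_x(y)$ on the Euclidean radius, and pulling back with $T_{-x}$ --- is correct and complete: the quadratic $|w|^2t^2-2t+1=0$ with admissible root $t=1/(1+\sqrt{1-|w|^2})$, the identity $1-|w|^2=(1-|x|^2)(1-|y|^2)/A[x,y]^2$, the intermediate expression for $z$, and the final cross-multiplication (which splits along the stated identities $(1-\overline{x}y)(1-x\overline{y})=A[x,y]^2$, $(1-|x|^2)+(1-|y|^2)-(1-|x|^2)(1-|y|^2)=1-|x|^2|y|^2$, and $x-|x|^2y=x(1-\overline{x}y)$) all check out. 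One cosmetic slip: the displayed equation $t|w|=(1-t)|w|/(1-t|w|^2)$ is the $\mathrm{th}$-form of the metric rather than \eqref{myrho} as cited; using \eqref{myrho} directly would give $t\sqrt{1-|w|^2}=1-t$ and bypass the quadratic, but either route yields the same $t$.
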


The extended real plane $\overline{\R}^2$ is 
identified with the Riemann sphere via the stereographic projection. Denote $e_3=(0,0,1)$. The {\it stereographic
projection } \ $\ \pi:\overline{\mathbb{R}}^2\to S^2(\frac{1}{2} e_3,\frac{1}{2} )$
is defined by \cite[(3.4), p. 28]{hkv}
\begin{equation}\label{1.13.}
  \pi(x)= e_3+\frac{x-e_3}{ |x-e_3|^2} \,,\;x\in {\mathbb{R}}^2\;;\;\,
  \pi(\infty)=e_3\;.
\end{equation}
Then $\pi$ is the restriction to $\overline{\mathbb{R}}^2$ of the inversion in
$S^2(e_3,1)$. 
Because  $f^{-1}=f$  for every inversion  $f$, it follows that  $\pi$
maps the ``Riemann sphere'' $S^2(\frac{1}{2} e_3,\frac{1}{2} )$
onto  $\overline{\mathbb{R}}^2$. By applying the stereographic projection $\pi$ from \eqref{1.13.}, the definition of the chordal metric $q$ in $\overline{\mathbb{R}}^2$ can be written as \cite[(3.5), p. 29]{hkv}
\begin{equation}\label{1.14.}
   q(x,y)=|\pi(x)-\pi(y)|\;;\;\, x,\,y\in  \overline{\mathbb{R}}^2\;,
\end{equation}
which is equivalent to \eqref{1.15.} for $n=2$.

The {\it antipodal point} $\tilde{a} = -1/\overline{a}$ satisfies
$q(a,\tilde{a})=1$ \cite[p. 29]{hkv}. Thus, the great circle through two given points
$a, b \in\mathbb{C}$ has the center $m(a,\tilde{a}, b)$ given by
\eqref{geoLines} and its Euclidean radius is of course $|a- m(a,\tilde{a}, b)|$
while its chordal radius is $1/\sqrt{2}.$ For instance, if 
$a\in {\mathbb{R}^2}\setminus \{0\},$ then $q(a,\pm ia/|a|) = 1/\sqrt{2}.$ Note that if $a,b\in\C$ are collinear with the origin, then $m(a,\tilde{a}, b)$ is not well-defined, and the stereographic projection of the great circle through $a$ and $b$ on the plane $\C$ is a Euclidean line $L(a,b)$.
For further information, see \cite[Thm 18.4.2]{ber}.

Hereafter, we will discuss mainly geometry on the complex plane.

For a point $ z\in\mathbb{C} $, the stereographic projection $ \pi $ in \eqref{1.13.} is written as
\begin{align}\label{riePoint}
      \pi\,:\, z\mapsto \Big(\frac{\mbox{Re}\, z}{1+|z|^2},\frac{\mbox{Im}\, z}{1+|z|^2},
      \frac{|z|^2}{1+|z|^2}\Big).
\end{align}
Conversely, the point $ z\in\mathbb{C} $ corresponding to the point
$ (\xi,\eta,\zeta) $ on the Riemann sphere can be written as
$$
    z=\frac{\xi}{1-\zeta}+i\frac{\eta}{1-\zeta},
     \quad \mbox{where}\quad\
     \xi^2+\eta^2+\big(\zeta-\frac12\big)^2=\big(\frac14\big).
$$

Assume that points $ \pi(a)$, $ \pi(b)$, $ \pi(c)$, and $ \pi(d)$ are not on the same great circle. The intersection point of stereographic projections of two great circles passing through $ \pi(a)$, $ \pi(b)$ and $ \pi(c)$, $ \pi(d)$ inside the closed unit disk is denoted as
\begin{align}\label{equ_gcis}
{\rm GCIS}[a,b,c,d]=\{z\in\overline{\B}^2\,:\,F(z;a,b,c,d)=0\},    
\end{align}
where the function $F$ is as follows:
\begin{equation*}
\begin{aligned}
F(z;a,b,c,d)\equiv&   \big((\overline{a}\overline{b}(a-b)+\overline{a}-\overline{b})
     (c\overline{d}-\overline{c}d)-(a\overline{b}-\overline{a}b)
     (\overline{c}\overline{d}(c-d)+\overline{c}-\overline{d})\big)z^2 \\
  & \ 
    +\big(-(1-\overline{a}\overline{b}cd)(a-b)
     (\overline{c}-\overline{d})+(1-ab\overline{c}\overline{d})
     (\overline{a}-\overline{b})(c-d)\\
  & \   +(a-b)(c-d)(\overline{a}\overline{b}-\overline{c}\overline{d})
     -(\overline{a}-\overline{b})(\overline{c}-\overline{d})(ab-cd)\big)z\\
  & \ -(c\overline{d}-\overline{c}d)(ab(\overline{a}-\overline{b})+(a-b))
    +(a\overline{b}-\overline{a}b)(cd(\overline{c}-\overline{d})+(c-d)).
\end{aligned}    
\end{equation*}
This result follows from the definition of great circles passing through two points and the formula of intersection points of two Euclidean circles. See also the proof of Theorem \ref{thm_cmp} and Appendix.

\section{Five collinear intersection points}

In this section, we study the five collinear intersection points of the Euclidean lines passing through such points that are defined in terms of two given points in the unit disk.

Fix first two distinct points $a,b\in\mathbb{B}^2\backslash\{0\}$ so that they are non-collinear with the origin. 
Let $ a^{\ast}$ and $ b^{\ast} $ be the reflections of $ a $ and $ b $ with respect to the
unit circle, respectively.
Then, 
\begin{equation}\label{eq:ab-ast}
 a^{\ast}=\frac{1}{\overline{a}}, \quad  b^{\ast}=\frac{1}{\overline{b}}. 
\end{equation}
Let $ a_{\ast} $  and $ b_{\ast} $ be the intersection points of
the unit circle and the hyperbolic line passing through $ a $ and $ b $ that
are near $ a $ and $ b $, respectively.
These points can be found with \eqref{epdef}, which yields the formulas
\begin{equation}\label{eq:ab-end}
  a_{\ast}=\frac{b(1-a\overline{b})|a-b|+(a-b)|1-a\overline{b}|}
              {(1-a\overline{b})|a-b|+\overline{b}(a-b)|1-a\overline{b}|},
\quad
  b_{\ast}=\frac{a(1-\overline{a}{b})|b-a|+(b-a)|1-\overline{a}{b}|}
              {(1-\overline{a}{b})|b-a|+\overline{a}(b-a)|1-\overline{a}{b}|}.
\end{equation}
In the notation of \eqref{epdef}, $a_*=ep(a,b)$ and $b_*=ep(b,a)$.

Fix then the intersection points
\begin{equation}\label{equ_kstuv}
\begin{aligned}
  k &= {\rm LIS}[a_{\ast},a^{\ast},b_{\ast},b^{\ast}],&  
  s &= {\rm LIS}[a,b_{\ast},b,a_{\ast}],&
  t &= {\rm LIS}[a_{\ast},b^{\ast},b_{\ast},a^{\ast}],\\
  u &= {\rm LIS}[a,b^{\ast},b,a^{\ast}],& 
  v &= {\rm LIS}[a,a_{\ast},b,b_{\ast}].&
\end{aligned}    
\end{equation}
Furthermore, let $m$ be the hyperbolic midpoint of $a$ and $b$. See Figure \ref{fig:kstuvw}. 

\begin{figure}
    \centering
    \begin{tikzpicture}[scale=3]
    \draw (-1.396,-1.145) -- (2,0);
    \draw (-1.396,-1.145) circle (0.3mm);
    \node[scale=1.3] at (-1.396,-0.98) {$k$};
    \draw (2,0) circle (0.3mm);
    \node[scale=1.3] at (1.97,0.15) {$a^*$};
    \draw (-1.396,-1.145) -- (0.771,1.202);
    \draw (0.771,1.202) circle (0.3mm);
    \node[scale=1.3] at (0.771,1.33) {$b^*$};
    \draw (0.5,0) -- (0.474,0.880);
    \draw (0.5,0) circle (0.3mm);
    \node[scale=1.3] at (0.45,-0.1) {$a$};
    \draw (0.474,0.880) circle (0.3mm);
    \node[scale=1.3] at (0.43,1.05) {$b_*$};
    \draw (0.378,0.589) -- (0.933,-0.359);
    \draw (0.378,0.589) circle (0.3mm);
    \node[scale=1.3] at (0.28,0.57) {$b$};
    \draw (0.933,-0.359) circle (0.3mm);
    \node[scale=1.3] at (1,-0.48) {$a_*$};
    \draw (0.771,1.202) -- (0.933,-0.359);
    \draw (0.474,0.880) -- (2,0);
    \draw (0.5,0) -- (0.771,1.202);
    \draw (0.378,0.589) -- (2,0);
    \draw (0.251,0.206) -- (0.933,-0.359);
    \draw (0.251,0.206) -- (0.474,0.880);
    \draw (0,0) circle (1cm);
    \draw (0,0) circle (0.3mm);
    \node[scale=1.3] at (0,0.15) {$0$};
    \draw[dashed] (1.25,0.462) circle (0.880cm);
    \draw (0.251,0.206) circle (0.3mm);
    \node[scale=1.3] at (0.18,0.28) {$v$};
    \draw (0.381,0.313) circle (0.3mm);
    \node[scale=1.3] at (0.4,0.2) {$m$};
    \draw (0.488,0.400) circle (0.3mm);
    \node[scale=1.3] at (0.4,0.43) {$s$};
    \draw (0.613,0.503) circle (0.3mm);
    \node[scale=1.3] at (0.56,0.6) {$u$};
    \draw (0.826,0.677) circle (0.3mm);
    \node[scale=1.3] at (0.87,0.81) {$t$};
    \draw (-1.6,-1.312) -- (2.3,1.886);
    \end{tikzpicture}
    \caption{The points of Theorem \ref{thm_5pointshyp} for $a=0.5$ and $b=0.7e^i$. The line passing through $k$ and the origin contains the points $k,0,v,m,s,u,t$ in this order. The dashed circle passing through $b^*,b_*,b,m,a,a_*,a^*$ is orthogonal to the unit circle.}
\label{fig:kstuvw}
\end{figure}
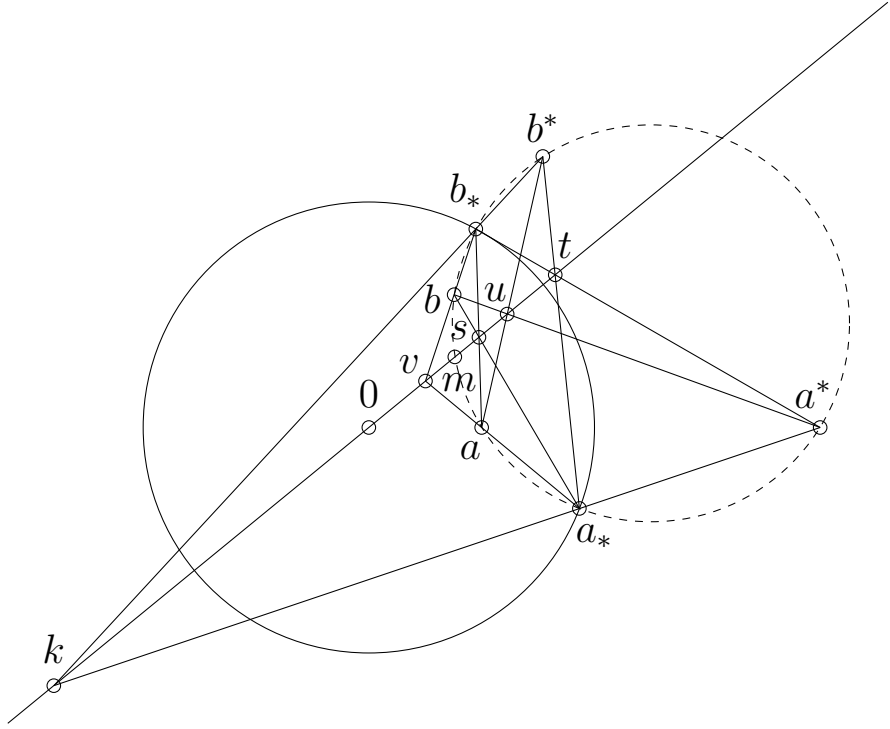

The following result was obtained in \cite{vw1} by using the properties of M\"obius transformations without finding the explicit formulas for the intersection points.

\begin{theorem}\label{thm_5pointshyp}\cite[Lemma 4.6, p. 282]{vw1}
For $a,b\in\B^2$, the five points $k,s,t,u,v$ fixed as in \eqref{equ_kstuv}, the hyperbolic midpoint $m$ of $a$ and $b$, and the origin are all collinear. 
\end{theorem}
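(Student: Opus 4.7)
My plan is to combine Pascal's theorem, applied to the six concyclic points $a, b, a^{\ast}, b^{\ast}, a_{\ast}, b_{\ast}$, with an explicit coordinate comparison between $u$ and the hyperbolic midpoint $m$. The preliminary observation is that all six points lie on one circle $C$: the Euclidean extension of the hyperbolic geodesic $J^{\ast}[a,b]$ is a circle orthogonal to $S^1$, meeting $S^1$ at $a_{\ast}, b_{\ast}$ by \eqref{epdef} and invariant under inversion in $S^1$, which interchanges $a \leftrightarrow a^{\ast}$ and $b \leftrightarrow b^{\ast}$. Moreover, because $a^{\ast} = a/|a|^2$ and $b^{\ast} = b/|b|^2$, the Euclidean lines $L[a, a^{\ast}]$ and $L[b, b^{\ast}]$ already pass through the origin.

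Applying Pascal's theorem to the four hexagons
\[
(a, a^{\ast}, a_{\ast}, b, b^{\ast}, b_{\ast}),\ (a, a^{\ast}, b_{\ast}, b, b^{\ast}, a_{\ast}),\ (a, b^{\ast}, a_{\ast}, b, a^{\ast}, b_{\ast}),\ (a, b^{\ast}, b_{\ast}, b, a^{\ast}, a_{\ast})
\]
inscribed in $C$ will yield the four collinear triples $\{0,k,s\}$, $\{0,t,v\}$, $\{s,t,u\}$, and $\{k,u,v\}$. For instance, the first hexagon has opposite-side intersections $L[a,a^{\ast}] \cap L[b,b^{\ast}] = 0$, $L[a^{\ast}, a_{\ast}] \cap L[b^{\ast}, b_{\ast}] = k$, and $L[a_{\ast}, b] \cap L[b_{\ast}, a] = s$. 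These four Pascal lines, together with the six points $\{0,k,s,t,u,v\}$, form a complete-quadrilateral configuration in which each line contains exactly three of the points.

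Next I would bring in the midpoint. Proposition~\ref{LISprop}(3) immediately gives $u = (a(1-|b|^2) + b(1-|a|^2))/(1 - |a|^2|b|^2)$, while Theorem~\ref{myhmidp} expresses $m$ with the same numerator over the positive real denominator $1 - |a|^2|b|^2 + A[a,b]\sqrt{(1-|a|^2)(1-|b|^2)}$. Hence $u$ is a positive real multiple of $m$, and $0, u, m$ are collinear. By the combinatorics of the complete quadrilateral, any further collinearity among $\{0,k,s,t,u,v\}$---for example, showing that one of $k, s, t, v$ is a real scalar multiple of $m$---forces all four Pascal lines to coincide with the line through $0$ and $m$, and hence places every one of the seven named points on it.

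The hard part will be this extra scalar identity. The most accessible target is $k = {\rm LIS}[a_{\ast}, a^{\ast}, b_{\ast}, b^{\ast}]$: substituting the formulas \eqref{eq:ab-end} for $a_{\ast}, b_{\ast}$ into \eqref{eq:lis} and verifying that the resulting expression is a real multiple of $m$. Because \eqref{eq:ab-end} carries the square-root factors $|a-b|$ and $|1-a\bar{b}|$, this reduces to a lengthy but finite polynomial identity in $a, \bar a, b, \bar b$, naturally handled by the symbolic computation system Risa/Asir used elsewhere in the paper. A more synthetic alternative, taken in \cite[Lemma~4.6]{vw1}, bypasses the algebra by normalizing $a, b$ via a Möbius self-map of the disk and invoking Möbius-invariance of the hyperbolic midpoint and of cross-ratios, at the cost of not producing the explicit coordinates tabulated in Section~3.
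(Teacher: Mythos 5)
Your route is genuinely different from the paper's. The paper does not reprove the collinearity synthetically: it imports the statement from \cite[Lemma 4.6]{vw1} (a M\"obius-invariance argument) and then, in Theorem \ref{thm_formulas}, computes all five of $k,s,t,u,v$ explicitly with Risa/Asir, observing afterwards that each equals a real multiple of $H=a(1-|b|^2)+b(1-|a|^2)$, as does $m$ by Theorem \ref{myhmidp}; that observation is the paper's effective proof. You instead exploit the fact that $a,b,a^{\ast},b^{\ast},a_{\ast},b_{\ast}$ all lie on the circle through $a,b$ orthogonal to $S^1$ and that $L[a,a^{\ast}]$, $L[b,b^{\ast}]$ pass through $0$, and your four Pascal hexagons are all correctly chosen: they do yield the collinear triples $\{0,k,s\}$, $\{0,t,v\}$, $\{s,t,u\}$, $\{k,u,v\}$ (the paper itself remarks only on the third). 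Your complete-quadrilateral bookkeeping is also right: $0$ and $u$ are opposite vertices, so $0,u,m$ collinear (which you get cheaply from Proposition \ref{LISprop}(3) and Theorem \ref{myhmidp}) does not by itself collapse the configuration, whereas one further identity of the form $k=c\,H$ (or $s$, $t$, or $v$) with $c\in\R$ forces all four Pascal lines to coincide and finishes the proof. What your approach buys is a reduction of the computational load from four nontrivial ${\rm LIS}$ evaluations to one; what it does not do is eliminate that last computation.

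That last step is the one genuine gap: you assert that substituting \eqref{eq:ab-end} into \eqref{eq:lis} and simplifying "reduces to a lengthy but finite polynomial identity" handled by a CAS, but you do not exhibit it, and it is not a formality --- it is exactly where the theorem's content lives (Pascal alone only produces a complete quadrilateral, which is generically \emph{not} degenerate). The identity you need is precisely an instance of what the paper's Theorem \ref{thm_formulas} proves, e.g.
\begin{equation*}
s=\frac{a(1-|b|^2)+b(1-|a|^2)}{2-(a\overline{b}+b\overline{a})-|a-b|\,|1-\overline{a}b|},
\end{equation*}
whose denominator is real; the key simplifications are $(1-a\overline{b})(1-\overline{a}b)=|1-\overline{a}b|^2$ and $(a-b)(\overline{a}-\overline{b})=|a-b|^2$, which clear the square roots. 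So your proposal is a valid and more economical proof scheme, but as written it is complete only modulo that one verification; either carry it out (by hand with the two identities above, or by CAS as the paper does) or cite Theorem \ref{thm_formulas} for it. A minor further caveat: Pascal and the quadrilateral argument need the relevant points distinct and the lines non-parallel, which holds here since $H\neq 0$ and $|a-b|\neq|1-\overline{a}b|$ for $a,b\in\B^2$ non-collinear with $0$, but this should be said.
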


The exact coordinates of the intersection points are given as follows.

\begin{theorem}\label{thm_formulas}
The explicit formulas of the points $k,s,t,u,v$ in \eqref{equ_kstuv} are
\begin{align*}
k&=\big(a(1-|b|^2)+b(1-|a|^2)\big)
       \dfrac{|a-b|-|1-\overline{a}b|}{(1-|ab|^2)|a-b|
         +\big(2|ab|^2-(|a|^2+|b|^2)\big)|1-\overline{a}b|},\\
s&=\dfrac{a(1-|b|^2)+b(1-|a|^2)}
            {2-(a\overline{b}+b\overline{a})-|a-b||1-\overline{a}b|},\\
t&=\frac{a(1-|b|^2)+b(1-|a|^2)}
       {(a\overline{b}+\overline{a}b)-2|ab|^2+|a-b||1-\overline{a}b|},\\
u&=\dfrac{a(1-|b|^2)+b(1-|a|^2)}{1-|ab|^2},\\
v&=\big(a(1-|b|^2)+b(1-|a|^2)\big)
      \dfrac{|1-\overline{a}b|-|a-b|}
           {\big(2-(|a|^2+|b|^2)\big)|1-\overline{a}b|-(1-|ab|^2)|a-b|}.
\end{align*}
\end{theorem}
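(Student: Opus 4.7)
My plan is to handle $u$ directly from an earlier result and to reduce the remaining four formulas to a controlled symbolic computation. Observe that $u = {\rm LIS}[a, b^*, b, a^*] = {\rm LIS}[a, 1/\overline{b}, b, 1/\overline{a}]$, so Proposition~\ref{LISprop}(3) delivers $u = (a(1-|b|^2) + b(1-|a|^2))/(1-|a|^2|b|^2)$ immediately. Furthermore, Theorem~\ref{thm_5pointshyp} guarantees that each of the five points is a real scalar multiple of $N := a(1-|b|^2) + b(1-|a|^2)$, so every target expression has the shape $N/D$ with $D \in \R$; this observation serves as the bookkeeping skeleton for the remaining four cases.

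For $k$, $s$, $t$, and $v$, the plan is to substitute \eqref{eq:ab-ast} and \eqref{eq:ab-end} into \eqref{eq:lis}. The endpoints $a_*, b_*$ carry the irrational pair $P := |a-b|$ and $Q := |1-\overline{a}b|$. I would treat $P$ and $Q$ as formal variables subject to the polynomial relations
$$P^2 = (a-b)(\overline{a}-\overline{b}), \qquad Q^2 = (1-\overline{a}b)(1-a\overline{b}),$$
together with $a_*\overline{a_*} = b_*\overline{b_*} = 1$. Under these reductions, the numerator and denominator of \eqref{eq:lis} become polynomials in $a, b, \overline{a}, \overline{b}, P, Q$ that are linear in each of $P$ and $Q$. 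One then factors $N$ out of the numerator, verifies that the quotient is real, and reads off the denominator $D$.

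The main obstacle is expression size. A blind substitution produces unwieldy intermediates, and without deliberately rationalising the $a_*, b_*$ denominators early, the common factor $N$ is easily obscured. I would therefore proceed stepwise: first clear the denominators of $a_*$ and $b_*$ using \eqref{eq:ab-end}; then substitute into \eqref{eq:lis}; then reduce modulo the $P^2$ and $Q^2$ relations above; finally extract $N$ from the numerator and confirm the scalar $D$ with Risa/Asir, as described in the introduction. Because the collinearity in Theorem~\ref{thm_5pointshyp} forces the $N$ factorisation in each case, the residual work is the purely algebraic identification of $D$ for $k, s, t, v$.
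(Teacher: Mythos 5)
Your proposal is correct and follows essentially the same route as the paper: substitute the explicit expressions for $a^*,b^*,a_*,b_*$ into \eqref{eq:lis}, treat $|a-b|$ and $|1-\overline{a}b|$ as formal quantities reduced via $|a-b|^2=(a-b)(\overline{a}-\overline{b})$ and $|1-\overline{a}b|^2=(1-\overline{a}b)(1-a\overline{b})$, and let Risa/Asir carry out the simplification before factoring out $a(1-|b|^2)+b(1-|a|^2)$. The only cosmetic differences are that you obtain $u$ from Proposition~\ref{LISprop}(3) where the paper cites \cite[(4.8)]{vw1}, and you use Theorem~\ref{thm_5pointshyp} as a guide for the expected shape, whereas the paper derives the formulas directly and notes they reprove that collinearity.
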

\begin{proof}
The point $u$ is already obtained in \cite[(4.8), p.282]{vw1}. By fixing $m_{ab}=|a-b|$ and $m_1=|1-a\overline{b}|$,
$a_{\ast}$ can be written as
\begin{equation}\label{eq:aast}
a_{\ast}=\frac{b(1-a\overline{b})m_{ab}+(a-b)m_1}
              {(1-a\overline{b})m_{ab}+\overline{b}(a-b)m_1}.
\end{equation}
As $|a-b|=|b-a|$ and $|1-\overline{a}b|=|1-a\overline{b}|$,
\begin{equation}\label{eq:bast}
b_{\ast}=\frac{a(1-\overline{a}{b})m_{ab}-(a-b)m_1}
              {(1-\overline{a}{b})m_{ab}-\overline{a}(a-b)m_1}.
\end{equation}
By substituting \eqref{eq:ab-ast}, \eqref{eq:aast}, and \eqref{eq:bast} into
\eqref{eq:lis}, we have
$$
t={\rm LIS}[a_{\ast},b^{\ast},b_{\ast},a^{\ast}]
    = \frac{\big(a(1-b\overline{b})+b(1-a\overline{a})\big)m_1}{(1-a\overline{b})(1-\overline{a}b)m_{ab}
         +(a\overline{b}+\overline{a}b-2ab\overline{a}\overline{b})m_1}.
$$
The substitution calculations and reduction of fractions here can be obtained by using the symbolic computation system Risa/Asir as the calculation of substitutions and reduction of fractions is just an algebraic process.

From $ (1-a\overline{b})(1-\overline{a}b)=m_1^2$, it follows that
\begin{align*}
  t &= \frac{\big(a(1-b\overline{b})+b(1-a\overline{a})\big)m_1}
     {m_1^2m_{ab}+(a\overline{b}+\overline{a}b-2ab\overline{a}\overline{b})m_1}
     = \frac{a(1-b\overline{b})+b(1-a\overline{a})}
        {(a\overline{b}+\overline{a}b)-2ab\overline{a}\overline{b}+m_1m_{ab}}\\
    &= \frac{a(1-|b|^2)+b(1-|a|^2)}
       {(a\overline{b}+\overline{a}b)-2|ab|^2+|a-b||1-\overline{a}b|}.
\end{align*}

By using the same method as above for the points $s,k$, and $v$, we have
\begin{align*}
s &={\rm LIS}[a,b_{\ast},b,a_{\ast}]\\
  &=\frac{\big(a(1-b\overline{b})+b(1-a\overline{a})\big)m_{ab}}
         {(2-a\overline{b}-\overline{a}b)m_{ab}
            -(a-b)(\overline{a}-\overline{b})m_1}
   =\frac{\big(a(1-b\overline{b})+b(1-a\overline{a})\big)m_{ab}}
         {(2-a\overline{b}-\overline{a}b)m_{ab}
            -m_{ab}^2m_1}\\
   &=\frac{a(1-b\overline{b})+b(1-a\overline{a})}
         {(2-a\overline{b}-\overline{a}b)-m_{ab}m_1}
    =\dfrac{a(1-|b|^2)+b(1-|a|^2)}
            {2-(a\overline{b}+b\overline{a})-|a-b||1-\overline{a}b|},
\end{align*}

\begin{align*}
  k &={\rm LIS}[a_{\ast},a^{\ast},b_{\ast},b^{\ast}]\\
    &=\frac{\big(a(1-b\overline{b})+b(1-a\overline{a})\big)(m_{ab}-m_1)}
       {(1-ab\overline{a}\overline{b})m_{ab}
           +(2ab\overline{a}\overline{b}-(a\overline{a}+b\overline{b}))m_1}\\
    &=\big(a(1-|b|^2)+b(1-|a|^2)\big)
       \dfrac{|a-b|-|1-\overline{a}b|}{(1-|ab|^2)|a-b|
         +\big(2|ab|^2-(|a|^2+|b|^2)\big)|1-\overline{a}b|},
\end{align*}

and

\begin{align*}
  v &= {\rm LIS}[a,a_{\ast},b,b_{\ast}]\\ 
    &=\frac{\big(a(1-b\overline{b})+b(1-a\overline{a})\big)
           \big((1-a\overline{b})(1-\overline{a}b)m_{ab}
                 -(a-b)(\overline{a}-\overline{b})m_1\big)}
     {(1-a\overline{b})(1-\overline{a}b)(2-a\overline{a}-b\overline{b})m_{ab}
         -(a-b)(\overline{a}-\overline{b})(1-ab\overline{a}\overline{b})m_1}\\
    &=\big(a(1-b\overline{b})+b(1-a\overline{a})\big)
       \frac{m_1^2m_{ab}-m_{ab}^2m_1}
           {m_1^2(2-a\overline{a}-b\overline{b})m_{ab}
                 -m_{ab}^2(1-ab\overline{a}\overline{b})m_1}\\
   &=\big(a(1-|b|^2)+b(1-|a|^2)\big)
      \dfrac{|1-\overline{a}b|-|a-b|}
           {\big(2-(|a|^2+|b|^2)\big)|1-\overline{a}b|-(1-|ab|^2)|a-b|}.
\end{align*}
\end{proof}

Each of these five points $k,s,t,u$, and $v$ has the form $r\big(a(1-|b|^2)+b(1-|a|^2)\big)$ for some real number $r$. From the formula \eqref{myzformua} of Theorem \ref{myhmidp}, we see trivially  that the hyperbolic midpoint $m$ also has this same property. Therefore, all these six points are on the line 
passing through the origin and the point $a(1-|b|^2)+b(1-|a|^2)$. This would also prove the collinearity stated in Theorem \ref{thm_5pointshyp}. Additionally, we can write these points in the form $c_jH$ for $j=1,...,6$ when $H=a(1-|b|^2)+b(1-|a|^2)$ and the constants $c_j$ are as in Table \ref{t1}.

\begin{table}[ht]
    \centering
    \begin{tabular}{|l|l|l|l|}
         \hline
         $j$ & Point & Definition & $c_j$\\
         \hline
         \multirow{2}{*}{1} & \multirow{2}{*}{$k$} & \multirow{2}{*}{${\rm LIS}[a_{\ast},a^{\ast},b_{\ast},b^{\ast}]$} & \multirow{3}{*}{$\dfrac{|a-b|-|1-\overline{a}b|}{(1-|ab|^2)|a-b|
         +\big(2|ab|^2-(|a|^2+|b|^2)\big)|1-\overline{a}b|}$}\\
         &&&\\
         &&&\\
         \hline
         \multirow{2}{*}{2} & \multirow{2}{*}{$s$} & \multirow{2}{*}{${\rm LIS}[a,b_{\ast},b,a_{\ast}]$} & \multirow{3}{*}{$\dfrac{1}
            {2-(a\overline{b}+b\overline{a})-|a-b||1-\overline{a}b|}$}\\
         &&&\\
         &&&\\
         \hline
         \multirow{2}{*}{3} & \multirow{2}{*}{$t$} & \multirow{2}{*}{${\rm LIS}[a_{\ast},b^{\ast},b_{\ast},a^{\ast}]$} & \multirow{3}{*}{$\dfrac{1}{(a\overline{b}+\overline{a}b)-2|ab|^2+|a-b||1-\overline{a}b|}$}\\
         &&&\\
         &&&\\
         \hline
         \multirow{2}{*}{4} & \multirow{2}{*}{$u$} & \multirow{2}{*}{${\rm LIS}[a,b^{\ast},b,a^{\ast}]$} & \multirow{3}{*}{$\dfrac{1}{1-|ab|^2}$}\\
         &&&\\
         &&&\\
         \hline
         \multirow{2}{*}{5} & \multirow{2}{*}{$v$} & \multirow{2}{*}{${\rm LIS}[a,a_{\ast},b,b_{\ast}]$} & \multirow{3}{*}{$\dfrac{|1-\overline{a}b|-|a-b|}
           {\big(2-(|a|^2+|b|^2)\big)|1-\overline{a}b|-(1-|ab|^2)|a-b|}$}\\
         &&&\\
         &&&\\
         \hline
         \multirow{2}{*}{6} & \multirow{2}{*}{$m$} & Hyperbolic
          & \multirow{3}{*}{$\dfrac{1}{1-|a|^2|b|^2 + A[a,b] \sqrt{(1-|a|^2)(1-|b|^2)}}$}\\
         && midpoint of &\\
         && $a$ and $b$ &\\
         \hline
    \end{tabular}
    \medskip
    \caption{The constants $c_j$, $j=1,...,6$, with which the formulas of the points $k,s,t,u,v$ from \eqref{equ_kstuv} and the hyperbolic midpoint $m$ of $a,b$ can be written in the form $c_jH$ where $H=a(1-|b|^2)+b(1-|a|^2)$.}
    \label{t1}
\end{table}

\begin{remark}
The origin coincides with the intersection point ${\rm LIS}[a,a^{\ast},b,b^{\ast}]$ because $a^{\ast}$ and $b^{\ast}$ are the reflections of $a$ and $b$, respectively.
\end{remark}

\begin{remark}\label{rmk_3.9}
(1) It is clear from Pascal's theorem (can be found e.g. in \cite[Sect. 3]{g11}) that the three points $s,t,u$ are collinear.\newline 
(2) The triangles $sab$ and $tb^*a^*$ are perspectives from the point $u$. According to the Desargues theorem \cite[Sect. 3]{g11}, the point $M=L(a,b)\cap L(a^*,b^*)$ is located on the line $L(a_*,b_*)$. From Desargues's theorem, concerning suitable
triangles, the other correspondences can also be proved, the only exception being the point $m$. Let $m^*$ be the point of intersection of the dashed circle with the axis. The central-axial collinearity given by the center $M$, the axis $Ou$ and the pair of points $\{a,b\}$ sends the cross-section $(a_*,a,m^*,b_*)$ to the cross-section $(b_*,b,m^*,a_*)$, therefore $m^*=m$, i.e. $m$ is also a point of the corresponding line.
\end{remark}

\section{Collinearity results for great circles}

In this section, we define points similar to $k,s,t,u,v$ in \eqref{equ_kstuv} but by using the stereographic projections of intersection points of great circles instead the intersection points of Euclidean lines. The new points are also collinear, as stated in Theorem \ref{thm_5pointschordal}, which is the corresponding version of Theorems \ref{thm_5pointshyp} and \ref{thm_formulas}. In fact, we show in Corollary \ref{cor_11points} that all the resulting eleven points are collinear with the origin. At the end of this section, another collinearity result, Theorem \ref{thm_pqc}, is also proved.


For such two points $a,b\in\mathbb{B}^2$ that are non-collinear with the origin, let $a^*,b^*,a_*,b_*$ be as in \eqref{eq:ab-ast} and \eqref{eq:ab-end}. Fix then $k_c,s_c,t_c,u_c,v_c$ as the intersection points of the stereographic projections of great circles so that 
\begin{equation}\label{equ_kstuv_c}
\begin{aligned}
k_c&={\rm GCIS}[a_{\ast},a^{\ast},b_{\ast},b^{\ast}],\quad 
s_c={\rm GCIS}[a,b_{\ast},b,a_{\ast}],\quad
t_c={\rm GCIS}[a_{\ast},b^{\ast},b_{\ast},a^{\ast}],\\
u_c&\in{\rm GCIS}[a,b^{\ast},b,a^{\ast}],\quad
v_c={\rm GCIS}[a,a_{\ast},b,b_{\ast}].
\end{aligned}    
\end{equation}
In other words, $k_c,s_c,t_c,u_c,v_c$ are as the points $k,s,t,u,v$ in \eqref{equ_kstuv}, except they are the stereographic projections of intersection points of great circles while $k,s,t,u,v$ are simply the intersection points of Euclidean lines. Note that there are two possible choices for $u_c$ as there are two intersection points in ${\rm GCIS}[a,b^{\ast},b,a^{\ast}]$ that both lie on the unit circle. 

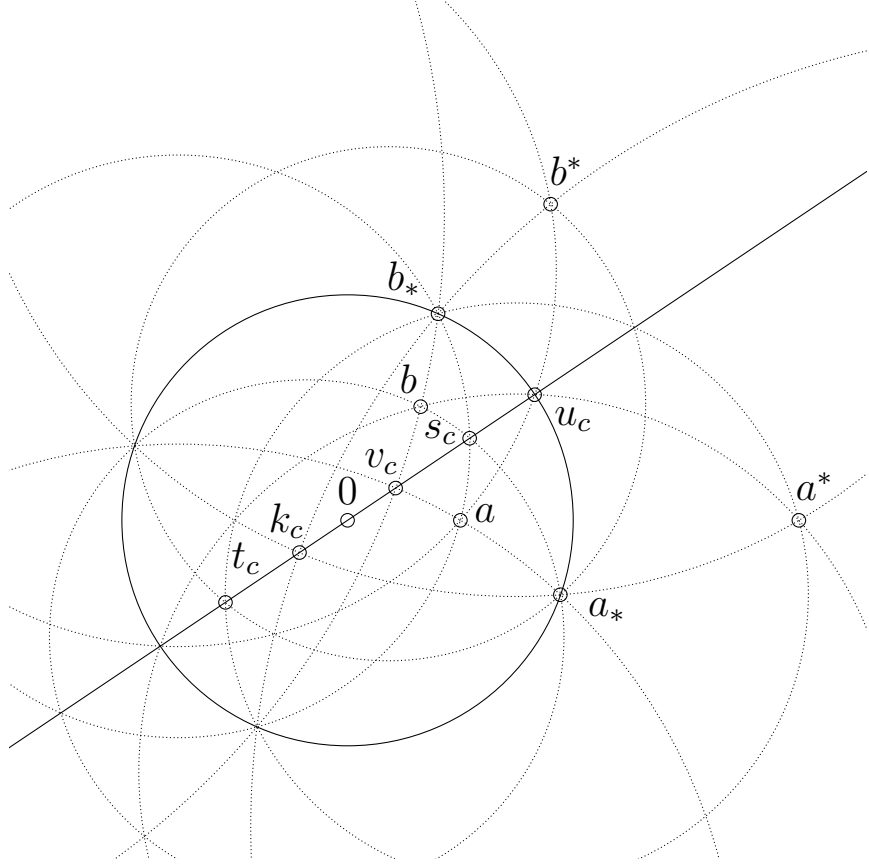
\begin{figure}
    \centering
    \begin{tikzpicture}[scale=3]
    \clip (-1.5,-1.5) rectangle (2.3,2.3);
    \draw (0.5,0) circle (0.3mm);
    \node[scale=1.3] at (0.61,0.03) {$a$};
    \draw (0.324,0.504) circle (0.3mm);
    \node[scale=1.3] at (0.27,0.63) {$b$};
    \draw (2,0) circle (0.3mm);
    \node[scale=1.3] at (2.07,0.15) {$a^*$};
    \draw (0.900,1.402) circle (0.3mm);
    \node[scale=1.3] at (0.97,1.55) {$b^*$};
    \draw (0.943,-0.330) circle (0.3mm);
    \node[scale=1.3] at (1.15,-0.4) {$a_*$};
    \draw (0.401,0.916) circle (0.3mm);
    \node[scale=1.3] at (0.25,1.07) {$b_*$};
    \draw (0,0) circle (0.3mm);
    \node[scale=1.3] at (0,0.13) {$0$};
    \draw (0,0) circle (1cm);
    \draw[densely dotted] (0.750,2.142) circle (2.480cm);
    \draw[densely dotted] (3.105,-1.360) circle (3.534cm);
    \draw[densely dotted] (-0.750,0.328) circle (1.292cm);
    \draw[densely dotted] (-0.181,-0.517) circle (1.140cm);
    \draw[densely dotted] (0.181,0.517) circle (1.140cm);
    \draw[densely dotted] (0.750,-0.328) circle (1.292cm);
    \draw[densely dotted] (-0.750,1.115) circle (1.675cm);
    \draw[densely dotted] (0.750,-1.115) circle (1.675cm);
    \draw[densely dotted] (-0.750,-2.142) circle (2.480cm);
    \draw[densely dotted] (-3.105,1.360) circle (3.534cm);
    \draw (-0.213,-0.143) circle (0.3mm);
    \node[scale=1.3] at (-0.27,0) {$k_c$};
    \draw (0.541,0.364) circle (0.3mm);
    \node[scale=1.3] at (0.41,0.39) {$s_c$};
    \draw (-0.541,-0.364) circle (0.3mm);
    \node[scale=1.3] at (-0.45,-0.17) {$t_c$};
    \draw (0.829,0.557) circle (0.3mm);
    \node[scale=1.3] at (1,0.45) {$u_c$};
    \draw (0.213,0.143) circle (0.3mm);
    \node[scale=1.3] at (0.15,0.26) {$v_c$};
    \draw (-5.219,-3.509) -- (5.219,3.509);
    \end{tikzpicture}
    \caption{The intersection points of Theorem \ref{thm_5pointschordal} for $a=0.5$ and $b=0.6e^i$. The circle marked with the black solid outline is the unit circle $S^1$. All other circles and arcs are stereographic projections of great circles.}
    \label{fig_gcp}
\end{figure}

\bigskip

\begin{theorem}\label{thm_5pointschordal}
If $a,b\in\mathbb{B}^2$ are non-collinear with the origin, then the points $k_c,s_c,t_c,u_c,v_c$ in \eqref{equ_kstuv_c} are on a line passing through the origin.
\end{theorem}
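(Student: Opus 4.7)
The strategy combines a $\mathbb{Z}/2$-symmetry on the Riemann sphere with a direct algebraic reduction of the quadratic defining $\mathrm{GCIS}$. The symmetry is the central map $\iota(z) = -z$, viewed on $S^2$ as the rotation by $\pi$ about the vertical axis; equivalently, $\iota$ commutes with the antipodal involution $\alpha(z) = -1/\overline{z}$, and hence preserves stereographic projections of great circles. Each such projection is the unique circle or line in $\mathbb{C}$ through the four points $\{z_1, z_2, -1/\overline{z_1}, -1/\overline{z_2}\}$ (a great circle containing a point also contains its antipode). A direct inspection of these four-point sets for the pairs of great circles in \eqref{equ_kstuv_c} yields the permutation
$$\iota(u_c) = u_c, \qquad \iota(k_c) = v_c, \qquad \iota(s_c) = t_c.$$
For instance, $\iota$ sends the circle through $\{a, b^*, -a^*, -b\}$ to the circle through $\{-a, -b^*, a^*, b\}$, which is exactly the projection of the great circle through $b$ and $a^*$, so the two great circles defining $u_c$ are interchanged; analogous checks handle $k_c \leftrightarrow v_c$ and $s_c \leftrightarrow t_c$. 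Since the candidate line $\ell$ through $0$ and $H := a(1-|b|^2) + b(1-|a|^2)$ is $\iota$-invariant, the theorem is reduced to showing $u_c, k_c, s_c \subset \ell$.

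For this reduction I would substitute $z = rH$ with $r \in \mathbb{R}$ into the quadratic $F(z;\cdot) = Pz^2 + Qz + R$ from \eqref{equ_gcis} for each of the three configurations. Since the two roots of $F$ are $S^2$-antipodal by construction (they are the $\pi$-images of the two antipodal intersection points of the underlying great circles), if one root is a real multiple of $H$ then so is the other. Consequently the only identity to check in each case is
$$R\,\overline{H} + P\,H = 0,$$
equivalent to the product-of-roots relation $R/P = z_1 z_2 = -H/\overline{H}$. After inserting the explicit expressions for $a^*, b^*$ from \eqref{eq:ab-ast} and for $a_*, b_*$ from \eqref{eq:ab-end}, and treating $|a-b|$ and $|1-\overline{a}b|$ as symbols subject to $|a-b|^2 = (a-b)(\overline{a}-\overline{b})$ and $|1-\overline{a}b|^2 = (1-\overline{a}b)(1-a\overline{b})$, the identity $R\overline{H} + PH = 0$ is verified in each of the three cases by symbolic computation, in the spirit of the proof of Theorem \ref{thm_formulas}.

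The principal obstacle is algebraic bulk: the coefficients $P$ and $R$ become long expressions after substitution, and recognising the cancellation $R\overline{H}+PH = 0$ requires careful factorisation. A purely geometric argument is blocked because the Möbius rotation by $\pi$ about the hyperbolic midpoint, which would swap $a$ and $b$, is not an isometry of the Riemann sphere, while the Euclidean reflection in $\ell$ preserves the configuration $\{a, b, a^*, b^*, a_*, b_*\}$ only in the special case $|a|=|b|$. The $\iota$-symmetry therefore yields only the passage from five configurations to three, and the precise direction of $\ell$ must emerge from the algebraic identity rather than from any visible symmetry of the figure.
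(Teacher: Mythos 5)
Your proposal is correct and proves the same statement, but by a genuinely leaner route than the paper. The paper's proof is a case-by-case reduction: for each of the five points it substitutes the defining data into the quadratic $F$ of \eqref{equ_gcis}, strips off non-vanishing factors with Risa/Asir, and exhibits the full reduced equation $\overline{H}z^2+2Rz-H=0$ with an explicit real $R$ (Table \ref{tgc}); collinearity then follows from the form of the roots. You replace this with two structural observations. First, the central map $\iota(z)=-z$ commutes with the antipodal involution and so permutes stereographic projections of great circles; your four-point bookkeeping correctly shows it swaps the circle pairs defining $k_c$ and $v_c$ and those defining $s_c$ and $t_c$, and since $\iota$ preserves $\overline{\B}^2$ and any line through the origin, this cuts five cases to three. (The paper's own Table \ref{tgc} confirms this: the $R$-values for $k_c,v_c$ and for $s_c,t_c$ are negatives of one another, i.e.\ $v_c=-k_c$ and $t_c=-s_c$.) Second, because the two roots of $F$ form an antipodal pair $\{w,-1/\overline{w}\}$, their product equals $-w/\overline{w}$, so the single identity $R\overline{H}+PH=0$ on the outer coefficients pins down $\arg w$ modulo $\pi$ and forces both roots onto $\mathbb{R}H$; you thus need only the product of roots, never the middle coefficient. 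Indeed, from the displayed form of $F$ one reads off that its constant term equals $\overline{P}$, so your identity amounts to $\mathrm{Re}(PH)=0$, one real polynomial identity per case. What you give up is the explicit data of Table \ref{tgc}, which the paper reuses in Corollary \ref{cor_11points} and in the remark about the exterior intersection points; what you gain is a much smaller symbolic verification. Two minor caveats: (i) as a point, $\iota(u_c)=-u_c\neq u_c$ (the set ${\rm GCIS}[a,b^{\ast},b,a^{\ast}]$ here consists of the two unimodular points $\pm H/|H|$ and is $\iota$-invariant only as a set) — harmless, since you check $u_c\in\ell$ directly; (ii) like the published proof, yours still delegates the final identities to a computer-algebra check after substituting \eqref{eq:ab-ast} and \eqref{eq:ab-end}, so it is a sound plan on the same footing of rigor as the original rather than a fully hand-verified argument.
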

\begin{proof}
Since the points $k_c,s_c,t_c,u_c,v_c$ are all given as 
stereographic projections of intersections of two great circles, we can use the formula \eqref{equ_gcis} and its function $F$.

For $t_c$, let
$$
T_c:=F(z;b_{\ast},a^{\ast},b^{\ast},a_{\ast}).
$$

By using the symbolic computation system Risa/Asir, we have
\begin{align*}
 numerator(T_c)=&2|a-b|^3|1-a\overline{b}|
                 (1-|b|^2)(1-|a|^2)(a\overline{b}-\overline{a}b)
                 \big(|a-b|+|1-a\overline{b}|\big) \\
               &\Big(\big(\overline{a}(1-|b|^2)+\overline{b}(1-|a|^2)\big)z^2
                   +2|1-a\overline{b}|(|a-b|-|1-a\overline{b}|)z \\
               &    -\big(a(1-|b|^2)+b(1-|a|^2)\big)\Big), \\
 denominator(T_c)=& |a|^2|b|^2
                   \big|(\overline{a}b-1)|a-b|+b(\overline{b}
                        -\overline{a})|1-a\overline{b}|\big|^2\\
                  & \times
                 \big|(\overline{a}b-1)|a-b|
                        +\overline{a}(a-b)|1-a\overline{b}|\big|^2.
\end{align*}

Because of the assumption that the two points $a,b$ are not collinear with the origin, the inequality $a\overline{b}-\overline{a}b\neq0$ holds.
Therefore, $t_c$ can be obtained by solving the following equation for $z$: 
\begin{equation}\label{eq:tc}
 \big(\overline{a}(1-|b|^2)+\overline{b}(1-|a|^2)\big)z^2
                   +2|1-a\overline{b}|(|a-b|-|1-a\overline{b}|)z 
                -\big(a(1-|b|^2)+b(1-|a|^2)\big)=0.
\end{equation}

\bigskip
For $u_c$, we have
\begin{align*}
&F(z;b,a^{\ast},a,b^{\ast}) \\
& \quad
    =\frac{-2(a\overline{b}-\overline{a}b)
      \Big(\big(\overline{a}(1-|b|^2)+\overline{b}(1-|a|^2)\big)z^2
           -\big(a(1-|b|^2)+b(1-|a|^2)\big)\Big)}
       {|ab|^2}.
\end{align*}
Hence,
$ u_c $ can be obtained by solving the equation
\begin{equation}\label{eq:uc}
 \big(\overline{a}(1-|b|^2)+\overline{b}(1-|a|^2)\big)z^2
           -\big(a(1-|b|^2)+b(1-|a|^2)\big)=0.
\end{equation}

\bigskip

For $s_c$, set
$$
S_c:=F(z;b,a_{\ast},a,b_{\ast}).
$$
By eliminating the non-zero factors from $numerator(S_c)=0$ under the assumptions of the theorem, 
we find that $s_c$ can be obtained as the solution of 
the following equation
\begin{equation}\label{eq:sc}
 \big(\overline{a}(1-|b|^2)+\overline{b}(1-|a|^2)\big)z^2
   +2|1-a\overline{b}|(|1-a\overline{b}|-|a-b|)z
   -\big(a(1-|b|^2)+b(1-|a|^2)\big)=0.
\end{equation}

For $v_c$, set
$$ 
V_c:=F(z;b,b_{\ast},a,a_{\ast}),
$$
and again, by eliminating the non-zero factors from $numerator(V_c)=0$, we have the following equation
\begin{equation}\label{eq:vc}
\begin{aligned}
 &\big(\overline{a}(1-|b|^2)+\overline{b}(1-|a|^2)\big)z^2
   +2\frac{(1-|a|^2)(1-|b|^2)|1-a\overline{b}|}{|1-a\overline{b}|-|a-b|}z\\
   &-\big(a(1-|b|^2)+b(1-|a|^2)\big)=0.
\end{aligned}
\end{equation}
(Note that, if $ |1-a\overline{b}|=|a-b| $, then either $ |a|=1 $ or $ |b|=1 $ holds by \eqref{myahl}.)

For $k_c$, set
$$ 
K_c:=F(z;b^{\ast},b_{\ast},a^{\ast},a_{\ast}).
$$
By eliminating the non-zero factors from $numerator(K_c)=0$, we have the following.
\begin{equation}\label{eq:kc}
\begin{aligned}
 &\big(\overline{a}(1-|b|^2)+\overline{b}(1-|a|^2)\big)z^2
   +2\frac{(1-|a|^2)(1-|b|^2)|1-a\overline{b}|}{|a-b|-|1-a\overline{b}|}z\\
   &-\big(a(1-|b|^2)+b(1-|a|^2)\big)=0.
\end{aligned}
\end{equation}

Let $H=a(1-|b|^2)+b(1-|a|^2)$ as in Table \ref{t1}.
Then the equations \eqref{eq:tc}, \eqref{eq:uc}, \eqref{eq:sc}, \eqref{eq:vc},
and \eqref{eq:kc} can be written in the form
\begin{equation}\label{eq:QR}
\overline{H}z^2+2Rz-H=0,\quad(R\in\R),
\end{equation}
for some real $R$, whose exact value can be seen from Table \ref{tgc}.

Note that the solution $z$ of equation \eqref{eq:QR} has the form
$$ 
 \frac{-R+\sqrt{R^2+\vert H\vert^2}}{\overline{H}}
 =\frac{-R+\sqrt{R^2+\vert H\vert^2}}{\vert H\vert^2}H \qquad \mbox{or}\qquad
 \frac{-R-\sqrt{R^2+\vert H\vert^2}}{\vert H\vert^2}H. 
$$
In both cases, the solution has the form $z=MH$ for some real value $M$. Therefore, ${\rm Arg}(z)={\rm Arg}(H)$ for $M>0$ and ${\rm Arg}(z)={\rm Arg}(H)+\pi$ for $M<0$. Hence, it follows that the five points $ b_c,u_c,s_c,v_c,k_c $ are on the line passing through $H$ and the origin.
\end{proof}

\begin{table}[ht]
    \centering
    \begin{tabular}{|l|l|l|}
         \hline
         \multirow{2}{*}{Point} & \multirow{2}{*}{Definition} & \multirow{2}{*}{$R$}\\
         &&\\
         \hline
         \multirow{2}{*}{$k_c$} & \multirow{2}{*}{${\rm GCIS}[a_{\ast},a^{\ast},b_{\ast},b^{\ast}]$} & \multirow{3}{*}{$\dfrac{(1-|a|^2)(1-|b|^2)|1-a\overline{b}|}{|a-b|-|1-a\overline{b}|}$}\\
         &&\\
         &&\\
         \hline
         \multirow{2}{*}{$s_c$} & \multirow{2}{*}{${\rm GCIS}[a,b_{\ast},b,a_{\ast}]$} &  \multirow{2}{*}{$|1-a\overline{b}|(|1-a\overline{b}|-|a-b|)$}\\
         &&\\
         \hline
         \multirow{2}{*}{$t_c$} & \multirow{2}{*}{${\rm GCIS}[a_{\ast},b^{\ast},b_{\ast},a^{\ast}]$} & \multirow{2}{*}{$|1-a\overline{b}|(|a-b|-|1-a\overline{b}|)$}\\
         &&\\
         \hline
         \multirow{2}{*}{$u_c$} & \multirow{2}{*}{${\rm GCIS}[a,b^{\ast},b,a^{\ast}]$} & \multirow{2}{*}{0}\\
         &&\\
         \hline
         \multirow{2}{*}{$v_c$} & \multirow{2}{*}{${\rm GCIS}[a,a_{\ast},b,b_{\ast}]$} & \multirow{3}{*}{$\dfrac{(1-|a|^2)(1-|b|^2)|1-a\overline{b}|}{|1-a\overline{b}|-|a-b|}$}\\
         &&\\
         &&\\
         \hline
    \end{tabular}
    \medskip
    \caption{The real numbers $R$ with which the formulas of the intersection points $k_c,s_c,t_c,u_c,v_c$ from \eqref{equ_kstuv_c} can be written as the roots of the equation $\overline{H}z^2+2Rz-H=0$, where $H=a(1-|b|^2)+b(1-|a|^2)$.}
    \label{tgc}
\end{table}

\begin{remark}
The equation \eqref{eq:QR} has two solutions in the form $z$ and $-1/\overline{z}$. Assuming $R\neq0$, there is one solution inside and one outside the unit disk. Therefore, under the condition $ a,b\in\mathbb{B}^2$, every equation of $T_c=0$, $S_c=0$, $V_c=0$, and $K_c=0$ in the proof of Theorem \ref{thm_5pointschordal} has a unique solution in the unit disk, and the points $t_c$, $s_c$, $v_c$, and $k_c$ are well defined from \eqref{equ_gcis}. While there are two possible choices for $u_c$, it does not matter which of them is considered as they both are collinear with the other points. Furthermore, the intersection points $k_c,s_c,t_c,u_c,v_c$ of the stereographic projections of great circles in Theorem \ref{thm_5pointschordal} are also collinear with the corresponding intersection points of $k_c,s_c,t_c,v_c$ outside the unit circle. The proof for this claim follows from the fact that these intersection points in the complement of the unit disk also fulfill the condition $F(z;a,b,c,d)=0$ in \eqref{equ_gcis}. Their collinearity can be seen from Figure \ref{fig_gcp}.
\end{remark}

\begin{corollary}\label{cor_11points}
If $a,b\in\B^2$ are non-collinear with the origin, $m$ is the hyperbolic midpoint of $a$ and $b$, the five points $k,s,t,u,v$ are as in \eqref{equ_kstuv} for these choices of $a$ and $b$, and the other five $k_c,s_c,t_c,u_c,v_c$ are as in \eqref{equ_kstuv_c}, then the 11 points
$k,m,s,t,u,v,k_c,s_c,t_c,u_c,v_c$ and the origin are collinear.
\end{corollary}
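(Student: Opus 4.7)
The plan is to reduce the corollary to showing that both groups of points lie on a single explicit Euclidean line, and then read off this line from the formulas already derived in the proofs of Theorems \ref{thm_5pointshyp} and \ref{thm_5pointschordal}. Set $H := a(1-|b|^2) + b(1-|a|^2)$; the claim I would prove is that the Euclidean line $L[0,H]$ through the origin and $H$ contains all eleven listed points (as well as the origin itself, trivially).

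First I would invoke Theorem \ref{thm_formulas} together with Table \ref{t1}: each of $k,s,t,u,v$ admits a representation $c_j H$ with $c_j\in\R$, and formula \eqref{myzformua} of Theorem \ref{myhmidp} exhibits the hyperbolic midpoint $m$ in exactly the same form. Hence the six points $k,s,t,u,v,m$ together with the origin all lie on $L[0,H]$. This reproves Theorem \ref{thm_5pointshyp} and at the same time identifies the line explicitly.

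Next I would exploit the argument used in the proof of Theorem \ref{thm_5pointschordal}, where each of $k_c,s_c,t_c,u_c,v_c$ was shown to satisfy an equation of the shape $\overline{H}\,z^{2}+2Rz-H=0$ with $R\in\R$, with the explicit values of $R$ listed in Table \ref{tgc}. The two roots of such a quadratic are
\[
z \;=\; \frac{-R\pm\sqrt{R^{2}+|H|^{2}}}{|H|^{2}}\,H,
\]
so in particular every root has the form $MH$ with $M\in\R$. Therefore each of $k_c,s_c,t_c,u_c,v_c$ also belongs to $L[0,H]$.

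Combining the two observations, all eleven points together with the origin lie on the single Euclidean line $L[0,H]$, which is precisely what the corollary asserts. There is no real obstacle: the only point worth emphasizing is that the vector $H$ appearing in Table \ref{t1} and the vector $H$ governing the quadratic \eqref{eq:QR} are literally the same expression $a(1-|b|^2)+b(1-|a|^2)$. Once this matching is recorded, the corollary is immediate from the already established Theorems \ref{thm_formulas}, \ref{myhmidp}, and \ref{thm_5pointschordal}, with no further computation required.
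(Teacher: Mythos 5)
Your proposal is correct and follows essentially the same route as the paper: both arguments identify the common line as the one through the origin and $H=a(1-|b|^2)+b(1-|a|^2)$, using Table \ref{t1} (with Theorem \ref{myhmidp}) to write $k,s,t,u,v,m$ as real multiples of $H$, and the quadratic $\overline{H}z^2+2Rz-H=0$ from the proof of Theorem \ref{thm_5pointschordal} to do the same for $k_c,s_c,t_c,u_c,v_c$. No gaps.
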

\begin{proof}
As can be seen from Theorem \ref{thm_formulas} or Table \ref{t1}, the arguments of points $k,m,s,t,u,v$ are all equal to either ${\rm Arg}(H)$ or ${\rm Arg}(H)+\pi$ with $H=a(1-|b|^2)+b(1-|a|^2)$, and
they therefore coincide with the arguments of points $k_c,s_c,t_c,u_c,v_c$
in Theorem \ref{thm_5pointschordal}.
\end{proof}

\bigskip

\begin{figure}
    \centering
    \begin{tikzpicture}[scale=3]
    \draw (0.5,0) circle (0.3mm);
    \node[scale=1.3] at (0.61,0.03) {$a$};
    \draw (0.324,0.504) circle (0.3mm);
    \node[scale=1.3] at (0.23,0.53) {$b$};
    \draw (2,0) circle (0.3mm);
    \node[scale=1.3] at (2.17,0) {$a^*$};
    \draw (0.900,1.402) circle (0.3mm);
    \node[scale=1.3] at (0.93,1.55) {$b^*$};
    \draw (0.943,-0.330) circle (0.3mm);
    \node[scale=1.3] at (1.07,-0.27) {$a_*$};
    \draw (0.401,0.916) circle (0.3mm);
    \node[scale=1.3] at (0.3,1.07) {$b_*$};
    \draw (0,0) circle (0.3mm);
    \node[scale=1.3] at (0,0.13) {$0$};
    \draw (0,0) circle (1cm);
    \draw[dashed] (1.25,0.544) circle (0.926cm);
    \draw (0.5,0) -- (0.401,0.916);
    \draw (2,0) -- (0.324,0.504);
    \draw (0.449,0.467) circle (0.3mm);
    \node[scale=1.3] at (0.41,0.33) {$p$};
    \draw (0.5,0) -- (0.900,1.402);
    \draw (2,0) -- (0.401,0.916);
    \draw (0.710,0.738) circle (0.3mm);
    \node[scale=1.3] at (0.67,0.85) {$q$};
    \draw (0.5,0) arc (345.2-360:27.0:1.292);
    \draw (2,0) arc (41.7:104.7:1.675);
    \draw (0.524,0.544) circle (0.3mm);
    \node[scale=1.3] at (0.53,0.69) {$p_c$};
    \draw (0.5,0) arc (318.2-360:9.8:1.675);
    \draw (2,0) arc (14.7:105.6:1.292);
    \draw (0.917,0.953) circle (0.3mm);
    \node[scale=1.3] at (1,0.83) {$q_c$};
    \draw (1.9,1.973) -- (-1,-1.038);
    \end{tikzpicture}
    \caption{The intersection points of Theorem \ref{thm_pqc} for $a=0.5$ and $b=0.6e^i$. The circle with solid line is the unit circle and the dashed circle is the orthogonal circle to the unit circle passing through $a,b$. All the circle arcs are arcs from the stereographic projections of great circles passing through the end points of the arc in question.}
    \label{fig_pqc}
\end{figure}
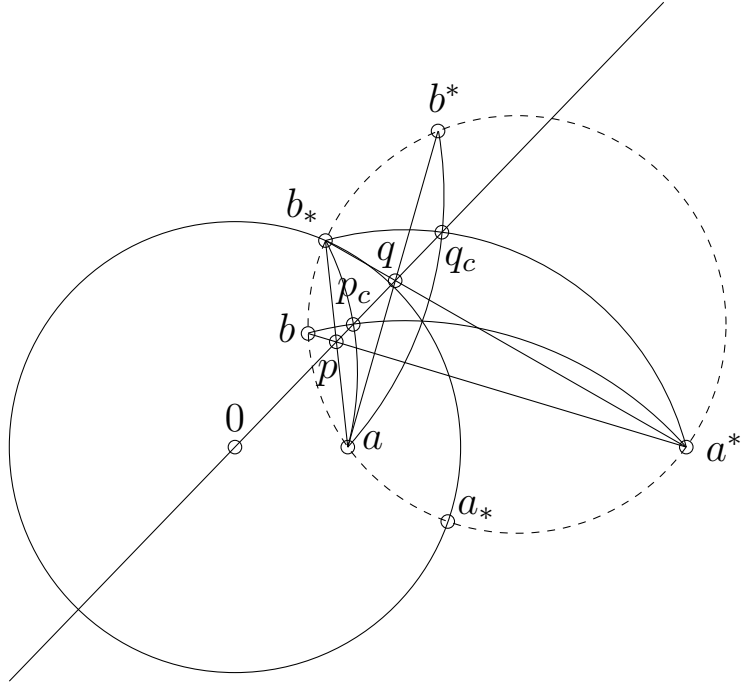

\begin{theorem}\label{thm_pqc}
Choose such $a,b\in\B^2\backslash\{0\}$ non-collinear with the origin and let $a^*,b^*,a_*,b_*$ be as in in \eqref{eq:ab-ast} and \eqref{eq:ab-end}. Fix $p={\rm LIS}[a,b_*,a^*,b]$ and $q={\rm LIS}[a,b^*,a^*,b_*]$. Similarly, let $p_c={\rm GCIS}[a,b_*,a^*,b]$, and $q_c={\rm GCIS}[a,b^*,a^*,b_*]$. The intersection points $p,q,p_c,q_c$ and the origin are collinear.
\end{theorem}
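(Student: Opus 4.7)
The plan is to follow the template already established by the proofs of Theorems \ref{thm_formulas} and \ref{thm_5pointschordal}: show that each of the four points $p,q,p_c,q_c$ is a \emph{real} scalar multiple of the single complex number $H=a(1-|b|^2)+b(1-|a|^2)$. Since $H\neq 0$ under our non-collinearity hypothesis, this places all four points, together with the origin, on the single Euclidean line through $0$ with direction $H$, which is exactly the line already identified in Corollary \ref{cor_11points}.

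For the two Euclidean intersections $p$ and $q$, I would substitute the closed-form expressions \eqref{eq:ab-ast} for $a^*,b^*$ and \eqref{eq:ab-end} for $a_*,b_*$ directly into the LIS formula \eqref{eq:lis}. Introducing the abbreviations $m_{ab}=|a-b|$ and $m_1=|1-a\overline{b}|$ and using the identities $(1-a\overline{b})(1-\overline{a}b)=m_1^{2}$ and $(a-b)(\overline{a}-\overline{b})=m_{ab}^{2}$, the numerators collapse to a common factor $H$ and the denominators become real. Just as in the proof of Theorem \ref{thm_formulas}, the cleanest way to manage the resulting symbolic mess is to feed the substitution into Risa/Asir, which produces formulas of the form $p=c_{p}H$ and $q=c_{q}H$ with explicit real constants $c_{p},c_{q}$.

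For the two great-circle intersections $p_c$ and $q_c$, I would plug the same data into the cubic-looking polynomial $F$ from \eqref{equ_gcis} to obtain the defining equations $F(z;a,b_*,a^*,b)=0$ and $F(z;a,b^*,a^*,b_*)=0$. After extracting and canceling the nonzero factors (those involving $|a|^2|b|^2$, $(a\overline{b}-\overline{a}b)$, $(1-|a|^2)(1-|b|^2)$, and the denominator-type brackets from $a_*,b_*$), the expectation, based on Theorem \ref{thm_5pointschordal}, is that each reduces to a quadratic of the exact shape
\begin{equation*}
\overline{H}\,z^{2}+2R\,z-H=0, \qquad R\in\R,
\end{equation*}
for suitable real parameters $R_{p}$ and $R_{q}$. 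Once this normal form is established, the quadratic formula yields $z=(-R\pm\sqrt{R^{2}+|H|^{2}})H/|H|^{2}$, which is manifestly a real multiple of $H$, so $p_c$ and $q_c$ land on the line $\R H$.

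The principal obstacle is purely algebraic: verifying that the coefficients of the two quadratics produced by $F$ really are $\overline{H}$, a real multiple, and $-H$ (up to a common nonzero scalar). This is where the symmetry of the data under $z\mapsto 1/\overline{z}$, which pairs $a$ with $a^{\ast}$ and $b$ with $b^{\ast}$, has to translate into the conjugation symmetry between the $z^{2}$- and constant-coefficients; likewise the real-valuedness of the linear coefficient reflects the reality of the products $m_{ab}m_{1}$ and $|1-a\overline{b}|^{2}$. I expect the cleanest argument to be a direct symbolic reduction, analogous to the one carried out for $k_c,s_c,t_c,u_c,v_c$ in the proof of Theorem \ref{thm_5pointschordal}, and to then conclude collinearity by the same argument used in that theorem and in Corollary \ref{cor_11points}.
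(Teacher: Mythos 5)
There is a genuine gap, and it is the central claim of your plan: the four points $p,q,p_c,q_c$ do \emph{not} lie on the line $\mathbb{R}H$ with $H=a(1-|b|^2)+b(1-|a|^2)$ from Corollary \ref{cor_11points}. The configurations here are not symmetric in $a$ and $b$ (each of the two lines defining $p$ and $q$ joins a point from the $a$-group $\{a,a^*\}$ to a point from the $b$-group $\{b,b_*,b^*\}$, rather than pairing $a$-points with $b$-points crosswise as in \eqref{equ_kstuv}), so there is no reason for the symmetric direction $H$ to appear, and in fact it does not. The paper's computation gives
\begin{equation*}
q=\frac{b(1-|a|^2)^2+a|a-b|\,(|1-\overline{a}b|-|a-b|)}{|b|^2(1-|a|^2)^2+|a-b|\,(|1-\overline{a}b|-|a-b|)},\qquad
p=\frac{b(1-|a|^2)^2+a|a-b|\,(|1-\overline{a}b|-|a-b|)}{(1-|a|^2)^2+|a|^2|a-b|\,(|1-\overline{a}b|-|a-b|)},
\end{equation*}
so the common direction is $\overline{Q}=b(1-|a|^2)^2+a|a-b|\,(|1-\overline{a}b|-|a-b|)$, which is visibly asymmetric in $a,b$ and in general not a real multiple of $H$. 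You can check numerically with the paper's own example $a=0.5$, $b=0.6e^{i}$: there $H$ has slope about $0.67$ (the line of Figure \ref{fig_gcp}), whereas $\overline{Q}$ has slope about $1.04$, matching the distinct line drawn in Figure \ref{fig_pqc}. Had you carried out the symbolic reduction you describe, it would not have produced real multiples of $H$, and the appeal to Corollary \ref{cor_11points} would have collapsed.

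The overall \emph{template} you propose is the right one and is what the paper does: compute $p,q$ from \eqref{eq:lis} and observe the denominators are positive reals, and reduce $F(z;\cdot)=0$ for $p_c,q_c$ to a quadratic whose roots are real multiples of a fixed complex number. But the normal form is also not the one you predict: for $q_c$ one gets $Qz^2+Rz-\overline{Q}=0$ and for $p_c$ one gets $Qz^2+Rz+\overline{Q}=0$, with $Q=\overline{b}(1-|a|^2)^2+\overline{a}|a-b|\,(|1-\overline{a}b|-|a-b|)$ and $R=(1-|a|^2)|1-\overline{a}b|\,(|a-b|-|1-\overline{a}b|)\in\R$; the leading coefficient is $Q$, not $\overline{H}$, and the constant term changes sign between the two cases. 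The correct conclusion is that all four points, and the origin, lie on the line through $0$ and $\overline{Q}$ -- a line through the origin, as the theorem asserts, but a different one from that of Theorems \ref{thm_5pointshyp} and \ref{thm_5pointschordal}.
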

\begin{proof}
For $q_c$, calculating  $F(z;a,b^{\ast},a^{\ast},b_{\ast})=0$,
eliminating the non-desired factors, 
$ q_c $ is obtained by the solution of the following equation,
\begin{equation}\label{eq:qc}
\begin{aligned}
&Qz^2+Rz-\overline{Q}=0,\quad \text{where}\\
&Q=\overline{b}(1-|a|^2)^2+\overline{a}|a-b|(|1-\overline{a}b|-|a-b|),\\
&R=(1-|a|^2)|1-\overline{a}b|(|a-b|-|1-\overline{a}b|).
\end{aligned}    
\end{equation}
Note that $R\in\R$ above.

\bigskip

Similarly for $p_c$, the point $p_c$ is given by the equation $Qz^2+Rz+\overline{Q}=0$, where $Q$ and $R$ are as in \eqref{eq:qc}, and the arguments of the solutions $q_c$ and $p_c$ are therefore ${\rm Arg}(\overline{Q})$ or ${\rm Arg}(\overline{Q})+\pi$.

\bigskip

For points $p$ and $q$, the following is also obtained from 
calculation of ${\rm LIS}[a,b^{\ast},a^{\ast},b_{\ast}]$ and 
${\rm LIS}[a,b_{\ast},a^{\ast},b]$, respectively.
\begin{align*}
 q&=\frac{b(1-|a|^2)^2+a|a-b|(|1-\overline{a}b|-|a-b|)}
     {|b|^2(1-|a|^2)^2+|a-b|(|1-\overline{a}b|-|a-b|)}, \\
 p&=\frac{b(1-|a|^2)^2+a|a-b|(|1-\overline{a}b|-|a-b|)}
    {(1-|a|^2)^2+|a|^2|a-b|(|1-\overline{a}b|-|a-b|)}.
\end{align*}
Since the denominators of $p$ and $q$ both have positive real values, clearly,
$$
  {\rm Arg}(q)={\rm Arg}(p)={\rm Arg}\big(b(1-|a|^2)^2+a|a-b|(|1-\overline{a}b|-|a-b|)\big)={\rm Arg}(\overline{Q})
$$
hold. Hence, the four points $ q_c,p_c,q,p $ are on the line passing through the origin and $\overline{Q}=b(1-|a|^2)^2+a|a-b|(|1-\overline{a}b|-|a-b|)$.
\end{proof}

\section{Hyperbolic and chordal midpoints}

In this section, we first introduce a few results related to the hyperbolic midpoint. In Theorems \ref{thm_hmpc} and \ref{thm_hmphinv}, we show new ways to construct the hyperbolic midpoint of two points in the unit disk. This geometric construction reveals the simple geometry not apparent
from the explicit formula in Theorem \ref{myhmidp} for the hyperbolic midpoint. Similar constructions for the hyperbolic midpoint can be found in the book \cite{a16} by Ahara. At the end of this section, Theorem \ref{thm_cmp} offers an explicit formula for the chordal midpoint of two points in the unit disk.

\begin{lemma}\label{lem_abk}
If $V\subset\B^2$ is a lens-shaped domain symmetric with respect
to the real axis bounded by two circular arcs with endpoints $-1$ and $1$, and $a,b\in\partial V$ so that ${\rm Im}\,b<0<{\rm Im}\,a$, then the hyperbolic midpoint of $a$ and $b$ with respect to the domain $\B^2$ is on the real axis.
\end{lemma}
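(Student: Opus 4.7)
The plan is to use Theorem \ref{myhmidp} to reduce the statement to a short algebraic condition, and then to verify that condition via a geometric identity satisfied by all circles through $\pm 1$.

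First, I would write the hyperbolic midpoint of $a$ and $b$ using \eqref{myzformua} as
\[
 m=\frac{a(1-|b|^2)+b(1-|a|^2)}{1-|a|^2|b|^2+A[a,b]\sqrt{(1-|a|^2)(1-|b|^2)}}.
\]
The denominator is manifestly real and strictly positive for $a,b\in\B^2$, since $1-|a|^2|b|^2>0$ and $A[a,b]\ge 0$. Hence $m\in\R$ if and only if $\mathrm{Im}\bigl(a(1-|b|^2)+b(1-|a|^2)\bigr)=0$, which after dividing by $(1-|a|^2)(1-|b|^2)>0$ becomes
\[
 \frac{\mathrm{Im}(a)}{1-|a|^2}+\frac{\mathrm{Im}(b)}{1-|b|^2}=0. \qquad(\ast)
\]

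The crux is the following geometric identity. Any Euclidean circle through both $-1$ and $1$ has its center on the perpendicular bisector of $[-1,1]$, that is, on the imaginary axis; write the center as $ic$ with $c\in\R$. The circle's equation $|z-ic|^2=1+c^2$ simplifies to $|z|^2-2c\,\mathrm{Im}(z)=1$, so for $c\ne 0$,
\[
 \frac{\mathrm{Im}(z)}{1-|z|^2}=-\frac{1}{2c}
\]
is \emph{constant} along the entire circle. (The excluded case $c=0$ is the unit circle itself, which cannot carry either arc since then $a$ or $b$ would lie on $\partial\B^2$ where the hyperbolic midpoint is undefined.)

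To finish, I would apply this identity separately to each bounding arc. Because $V$ is symmetric about the real axis, if the upper arc lies on a circle centered at $ic_1$ then the lower arc lies on the reflected circle centered at $-ic_1$. Since $a$ is on the upper arc and $b$ on the lower arc,
\[
 \frac{\mathrm{Im}(a)}{1-|a|^2}=-\frac{1}{2c_1},\qquad \frac{\mathrm{Im}(b)}{1-|b|^2}=\frac{1}{2c_1},
\]
so their sum vanishes, establishing $(\ast)$ and thus $m\in\R$. There is no serious obstacle: the whole argument hinges on noticing that $\mathrm{Im}(z)/(1-|z|^2)$ is a constant of motion along circles through $\pm 1$, after which the symmetry of $V$ does the remaining work.
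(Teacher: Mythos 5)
Your proof is correct, but it takes a genuinely different route from the paper's. The paper argues synthetically: it sets $k=J^*[a,b]\cap\R$ and applies the M\"obius transformation $T_k$ of \eqref{myT}, which fixes $\pm 1$, preserves $\B^2$ and the lens $V$ (each bounding arc is a circle through $\pm1$ meeting the real axis at a fixed angle, hence is mapped onto itself), and sends $J^*[a,b]$ to a diameter; by the reflection symmetry of $V$ the images of $a$ and $b$ are antipodal, so $k$ itself is the hyperbolic midpoint and lies on $\R$. You instead start from the closed-form midpoint of Theorem \ref{myhmidp} and reduce everything to the observation that $\mathrm{Im}(z)/(1-|z|^2)\equiv -1/(2c)$ along the circle through $\pm1$ centered at $ic$, so that the two mutually reflected arcs contribute opposite constants and the imaginary part of the numerator in \eqref{myzformua} cancels. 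Each approach has something to offer: the paper's proof needs no midpoint formula and directly exhibits the midpoint as $J^*[a,b]\cap\R$, which is the form in which the lemma is invoked in Theorem \ref{thm_hmpc}; your computation is short, self-contained, and sidesteps the (slightly glossed-over) verification that $T_k$ maps each bounding arc onto itself, while still recovering the same point because the midpoint automatically lies on $J^*[a,b]$. The individual steps you use all check out: the denominator in \eqref{myzformua} is real and strictly positive for $a,b\in\B^2$, circles through $\pm1$ have centers $ic$ on the imaginary axis and radius $\sqrt{1+c^2}$, and the hypothesis $\mathrm{Im}\,b<0<\mathrm{Im}\,a$ places $a$ and $b$ on the upper and lower arcs respectively.
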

\begin{proof}
Let $k$ be the intersection point of the hyperbolic line $J^*[a,b]$ and the real axis. Trivially, if $k=0$, then $J^*[a,b]$ is a diameter of the unit disk $\B^2$ and, by the symmetry of the domain $V$, $a=-b$ and the hyperbolic midpoint is the origin. Let us next assume that $k\neq0$. Now, the line $J^*[a,b]$ is the arc of the circle that contains $a,b$ and is orthogonal to the unit circle $S^1$. Consider the M\"obius transformation $T_k:\overline{\C}\to\overline{\C}$, defined as in \eqref{myT}. Clearly, $T_k(k)=0$, $T_k(1)=1$, and $T_k(-1)=-1$, and it also follows that the mapping preserves the unit disk $\B^2$, the lens-shaped domain $V$, and its boundary $\partial V$. As $T_k(k)=0$ for a point $k\in J^*[a,b]$, the mapped hyperbolic line is a diameter of $\B^2$. Since the points $T_k(a)$ and $T_k(b)$ are the intersection points of this diameter and the domain $V$, it is clear that $T_k(a)=-T_k(b)$ and $T_k(k)=0$ is their hyperbolic midpoint. Thus, $k$ is the hyperbolic midpoint of $a$ and $b$, and it is on the real axis.   
\end{proof}

\begin{figure}
    \centering
    \begin{tikzpicture}[scale=3]
    \draw (-0.480,0.306) circle (0.3mm);
    \node[scale=1.3] at (-0.45,0.43) {$a$};
    \draw (-0.105,-0.382) circle (0.3mm);
    \node[scale=1.3] at (-0.13,-0.5) {$b$};
    \draw (-0.268,0) circle (0.3mm);
    \node[scale=1.3] at (-0.3,-0.13) {$k$};
    \draw (-0.207,0.371) circle (0.3mm);
    \node[scale=1.3] at (-0.03,0.5) {$T_k(a)$};
    \draw (0.207,-0.371) circle (0.3mm);
    \node[scale=1.3] at (0.5,-0.5) {$T_k(b)$};
    \draw (0,0) circle (1cm);
    \node[scale=1.3] at (0.07,0.1) {$0$};
    \draw (-0.018,-0.999) arc (358.9-360:52.6:1.976);
    \draw (1,0) arc (47.7:132.2:1.486);
    \draw (-1,0) arc (227.7:312.2:1.486);
    \draw (-0.487,0.872) -- (0.487,-0.872);
    \draw (0,1.3) -- (0,-1.3);
    \draw (0.04,1.25) -- (0,1.3) -- (-0.04,1.25);
    \draw (1.3,0) -- (-1.3,0);
    \draw (1.25,0.04) -- (1.3,0) -- (1.25,-0.04);
    \draw (0,0) circle (0.3mm);
    \end{tikzpicture}
    \caption{The hyperbolic line $J^*[a,b]$ before and after the M\"obius transformation $T_k$ for $a=-1.1i+\sqrt{2.21}e^{1.9i}$, $b=1.1i-\sqrt{2.21}e^{1.5i}$, and $k=J^*[a,b]\cap\R$, and the lens-shaped domain $V=B^2(-1.1i,\sqrt{2.21})\cap B^2(1.1i,\sqrt{2.21})$.}
    \label{fig_kinv}
\end{figure}
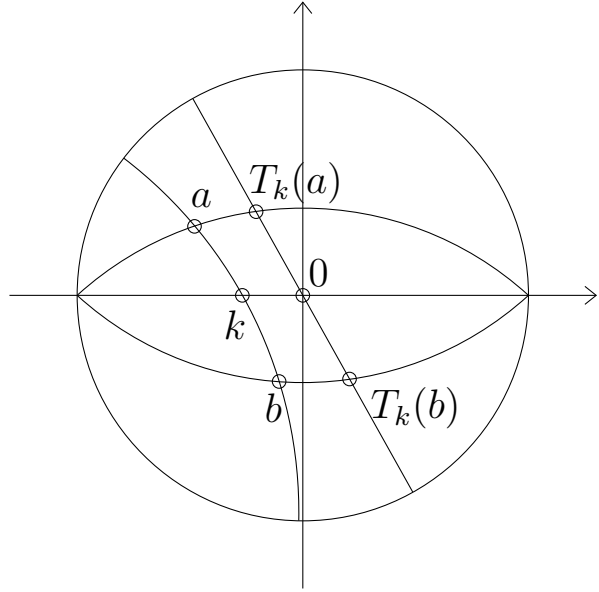

\begin{theorem}\label{thm_hmpc}
Fix $a,b\in\B^2\backslash\{0\}$ so that $a,b$ are not collinear with the origin. Let $c$ be the center of the great circle through $a$ and $b/|b|^2$, and fix $u$ so that $\{u,-u\}=S^1\cap S^1(c,|a-c|)$. The hyperbolic midpoint $m$ of $a$ and $b$ is the intersection point $[-u,u]\cap S^1(v,|a-v|)$, where $v$ is the center of the circle through $a$, $b$, and $a/|a|^2$.
\end{theorem}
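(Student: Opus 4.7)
The plan is to interpret the diameter $[-u,u]$ and the arc of $S^1(v,|a-v|)$ in $\B^2$ as two hyperbolic geodesics passing through $m$, and verify each incidence separately.

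Membership of $m$ in the circle is essentially free: $S^1(v,|a-v|)$ passes through $a$, $b$, and the reflection $a^{\ast}=1/\overline{a}$ of $a$ in $S^1$, so it is orthogonal to $S^1$, and its intersection with $\B^2$ is precisely the hyperbolic line $J^*[a,b]$. Hence $m\in J^*[a,b]\subset S^1(v,|a-v|)$, without any computation.

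The substantive step is to show $m\in[-u,u]$. Since $u$ and $-u$ are antipodal on $S^1$, the segment $[-u,u]$ is a diameter through $0$, so the center $c$ of the great circle through $u$ and $-u$ lies on the perpendicular bisector of this chord; the diameter is therefore perpendicular to the vector $c$. To pin down the direction of $c$ I would compute $c=m(a,-1/\overline{a},b^{\ast})$ with $b^{\ast}=b/|b|^2=1/\overline{b}$ by applying the second case of \eqref{geoLines} with $b^{\ast}$ in place of $b$. Clearing a common factor of $|b|^2$ from numerator and denominator collapses the numerator to $-H$, where $H=a(1-|b|^2)+b(1-|a|^2)$, and the denominator to $\overline{a}b-a\overline{b}=-2i\,{\rm Im}(a\overline{b})$, giving
\[
c=\frac{-iH}{2\,{\rm Im}(a\overline{b})}.
\]
The non-collinearity of $a$, $b$, $0$ ensures ${\rm Im}(a\overline{b})\neq 0$, so $c$ is a nonzero real multiple of $iH$, i.e.\ perpendicular to $H$. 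Consequently $[-u,u]$, being perpendicular to $c$, coincides with the line through $0$ in the direction of $H$.

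By Theorem \ref{myhmidp}, $m=H/D$ with $D=1-|a|^2|b|^2+A[a,b]\sqrt{(1-|a|^2)(1-|b|^2)}>0$, so $m$ is a positive real multiple of $H$ and therefore lies on $[-u,u]$. For uniqueness of the intersection inside $\overline{\B^2}$, the orthogonality relation $|v|^2=|a-v|^2+1$, combined with Vieta's formulas applied to the real parameter along the diameter, forces the two intersections of the line with the circle to have signed distances from $0$ with product $|v|^2-|a-v|^2=1$, so exactly one of them lies in $\overline{\B^2}$, namely $m$. The only real obstacle is the algebraic simplification of $m(a,-1/\overline{a},1/\overline{b})$ to the clean form above; once that is done, the geometric conclusion is immediate.
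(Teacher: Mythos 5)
Your proof is correct, but it takes a genuinely different route from the paper. The paper argues synthetically: it observes that the stereographic projections of the two great circles through $a,b/|b|^2$ and through $b,a/|a|^2$ are symmetric with respect to $L(u,-u)$, so that $a$ and $b$ sit on opposite sides of that line on the boundary of a lens-shaped region, and then invokes Lemma \ref{lem_abk} (proved via the M\"obius map $T_k$) after rotating $u$ to $1$; it never touches the explicit midpoint formula. You instead compute the center $c=m(a,-1/\overline{a},1/\overline{b})$ from \eqref{geoLines}, find $c=-iH/(2\,\mathrm{Im}(a\overline{b}))$ with $H=a(1-|b|^2)+b(1-|a|^2)$, deduce that the common chord $[-u,u]$ of $S^1$ and $S^1(c,|a-c|)$ is the diameter in the direction of $H$, and then read off from Theorem \ref{myhmidp} that $m$ is a positive real multiple of $H$. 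Your computation checks out (multiplying numerator and denominator by $|b|^2$ does give $-H$ over $\overline{a}b-a\overline{b}$), and your Vieta argument $t_1t_2=|v|^2-|a-v|^2=1$ is a nice touch that justifies why the intersection with $\overline{\B}^2$ is a single point, something the paper leaves implicit. What your route buys is brevity, only one of the two great circles, and an explicit link to the direction $H$ along which all the points of Corollary \ref{cor_11points} lie; what the paper's route buys is independence from the formula \eqref{myzformua} of \cite{wvz}, so it can serve as a genuine geometric construction of $m$ rather than a verification. One small point to add: you should note that $H\neq 0$ (which, like $\mathrm{Im}(a\overline{b})\neq 0$, follows from the non-collinearity of $a,b,0$, since $H=0$ would force $a/b\in\R$), as this is needed both for $c\neq 0$ in the perpendicular-bisector step and for the direction of $m$ to be well defined.
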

\begin{proof}
The stereographic projections of two great circles passing through $a,b/|b|^2$ and $b,a/|a|^2$ are symmetric with respect to the line $L(u,-u)$, so the points $a$ and $b$ are on different sides of $L(u,-u)$ on the boundary of a lens-shaped domain $B^2(c,|c-u|)\cap B^2(c,|c-u|)$. The circle $S^1(v,|a-v|)$ passing through $a$, $b$, and $a/|a|^2$ is orthogonal to the unit circle so $J^*[a,b]=S^1(v,|a-v|)\cap\B^2$. Consequently, the hyperbolic midpoint $m$ must be the intersection point $J^*[a,b]\cap[u,-u]$ and, by rotating all the points around the origin until $u$ is at 1, we see that this result follows from Lemma \ref{lem_abk}.
\end{proof}

\begin{remark}
The fact that $\{u,-u\}=S^1\cap S^1(c,|a-c|)$ for a complex point $u$ in Theorem \ref{thm_hmpc} follows from \cite[Ex. 3.15(1), p. 33]{hkv}.
\end{remark}

\begin{theorem}\label{thm_hmphinv}
Fix $a,b\in\B^2\backslash\{0\}$ so that $a,b$ are not collinear with the origin and $|a|\neq|b|$. Let $c={\rm LIS}[a,b,a_*,b_*]$, where $a_*,b_*$ are as in \eqref{eq:ab-end}. Then the point $m=J^*[a,b]\cap S^1(c,\sqrt{|c|^2-1})$ is the hyperbolic midpoint of $a$ and $b$.
\end{theorem}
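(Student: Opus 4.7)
The plan is to exhibit the Euclidean inversion $\iota$ in $S^1(c,r)$, with $r:=\sqrt{|c|^2-1}$, as a hyperbolic reflection of $\B^2$ that interchanges $a$ with $b$ (and $a_*$ with $b_*$). Its fixed hyperbolic line must then contain the hyperbolic midpoint $m$, and since $m$ also lies on $J^*[a,b]$, the identification $m=J^*[a,b]\cap S^1(c,r)$ follows immediately.

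First I would check orthogonality: from $|c|^2=1^2+r^2$ the circle $S^1(c,r)$ is orthogonal to $S^1$, so $\iota$ preserves $\B^2$ and is a hyperbolic reflection whose fixed set inside $\B^2$ is the hyperbolic line $S^1(c,r)\cap\B^2$. Next I would establish the two swaps using power-of-a-point. Because $c\in L[a_*,b_*]$ and $a_*,b_*\in S^1$, the power of $c$ with respect to the unit circle gives
\begin{equation*}
|c-a_*|\cdot|c-b_*|=|c|^2-1=r^2.
\end{equation*}
The four points $a,b,a_*,b_*$ are concyclic on the circle carrying $J^*[a,b]$ (the orthogonal circle), and $c$ lies on both of the chord lines $L[a,b]$ and $L[a_*,b_*]$ of that circle; hence the power of $c$ with respect to the same circle yields
\begin{equation*}
|c-a|\cdot|c-b|=|c-a_*|\cdot|c-b_*|=r^2.
\end{equation*}
A short inspection of the configuration (with the cyclic order $a_*,a,b,b_*$ on the orthogonal circle from Proposition \ref{UCLISprop}, the two chords $[a,b]$ and $[a_*,b_*]$ lie on the same side of their common external intersection $c$) shows that $a_*,b_*$ lie on a common ray from $c$, and likewise $a,b$. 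Combined with the two distance identities above, this forces $\iota(a_*)=b_*$ and $\iota(a)=b$.

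Since $\iota$ is an orientation-reversing hyperbolic isometry that swaps $a$ and $b$, its fixed hyperbolic line is the hyperbolic perpendicular bisector of $\{a,b\}$ and therefore contains the hyperbolic midpoint $m$ of $a$ and $b$. Together with $m\in J^*[a,b]$, this gives $m\in J^*[a,b]\cap S^1(c,r)\cap\B^2$, and because two distinct hyperbolic lines meet in at most one point of $\B^2$, this intersection is the single point asserted in the theorem. The hypothesis $|a|\neq|b|$ is needed to guarantee that $L[a,b]$ and $L[a_*,b_*]$ are not parallel, so that $c$ is a finite point; the excluded symmetric case $|a|=|b|$ corresponds to the degeneration in which the orthogonal ``reflection circle'' becomes the Euclidean line through $0$ perpendicular to the axis of symmetry.

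The one slightly delicate step—more bookkeeping than genuine obstacle—is justifying the geometric ordering that turns the two power-of-a-point identities into genuine interchanges under $\iota$ (rather than, say, fixing $a$ and $b$ individually on their ray from $c$). Once this is handled, the remainder of the argument is purely formal from the Möbius/hyperbolic-isometry viewpoint, and it has the advantage over a coordinate computation of making transparent why the construction produces the hyperbolic midpoint.
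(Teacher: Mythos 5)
Your proposal is correct and follows essentially the same route as the paper: both hinge on the inversion in $S^1(c,\sqrt{|c|^2-1})$, which is orthogonal to $S^1$ and swaps $a\leftrightarrow b$ and $a_*\leftrightarrow b_*$. The only (cosmetic) differences are that you justify the swaps explicitly via power-of-a-point at $c$ — a detail the paper leaves implicit — and you conclude via the perpendicular-bisector characterization of the fixed line, whereas the paper concludes via invariance of the absolute ratio $|a_*,a,m,b_*|$.
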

\begin{proof}
Clearly, the circle $S^1(c,\sqrt{|c|^2-1})$ is orthogonal to the unit circle $S^1$ and it passes through $m$. Let $h$ be the inversion in the circle $S^1(c,\sqrt{|c|^2-1})$. Now, $h(m)=m$, $h(a)=b$, $h(a_*)=b_*$ as this inversion preserves the unit disk $\B^2$ and the angle magnitudes. It follows that
\begin{align*}
|b_*,b,m,a_*|=|h(a_*),h(a),h(m),h(b_*)|=|a_*,a,m,b_*|.  
\end{align*}
Thus, $m$ is the hyperbolic midpoint of $a$ and $b$ by \cite[(4.9)]{hkv} or by \eqref{epdefrho}. 
\end{proof}

\begin{figure}
    \centering
    \begin{tikzpicture}[scale=3]
    \draw (0.412,0.282) circle (0.3mm);
    \node[scale=1.3] at (0.3,0.3) {$a$};
    \draw (0.672,-0.195) circle (0.3mm);
    \node[scale=1.3] at (0.63,-0.29) {$b$};
    \draw (0.525,0.851) circle (0.3mm);
    \node[scale=1.3] at (0.6,0.95) {$a_*$};
    \draw (0.934,-0.356) circle (0.3mm);
    \node[scale=1.3] at (1.07,-0.35) {$b_*$};
    \draw (1.223,-1.211) circle (0.3mm);
    \node[scale=1.3] at (1.25,-1.1) {$c$};
    \draw (0.515,-0.001) circle (0.3mm);
    \node[scale=1.3] at (0.37,0.01) {$m$};
    \draw (1.223,-1.211) -- (0.412,0.282);
    \draw (1.223,-1.211) -- (0.525,0.851);
    \draw (0,0) circle (1cm);
    \node[scale=1.3] at (0,0.15) {$0$};
    \draw (0,0) circle (0.3mm);
    \draw (1.223,0.190) arc (90:180:1.401);
    \draw (0.525,0.851) arc (148.3:249.0:0.827);
    \end{tikzpicture}
    \caption{The hyperbolic midpoint $m$ of $a=0.5e^{0.6i}$ and $b=0.7e^{6i}$ in the unit disk, the hyperbolic line $J^*[a,b]$ with end points $a_*$ and $b_*$, and an arc of the circle $S^1(c,\sqrt{|c|^2-1})$, where $c$ is the intersection point of Euclidean lines $L(a,b)$ and $L(a_*,b_*)$. }
    \label{fig_hmphInv}
\end{figure}
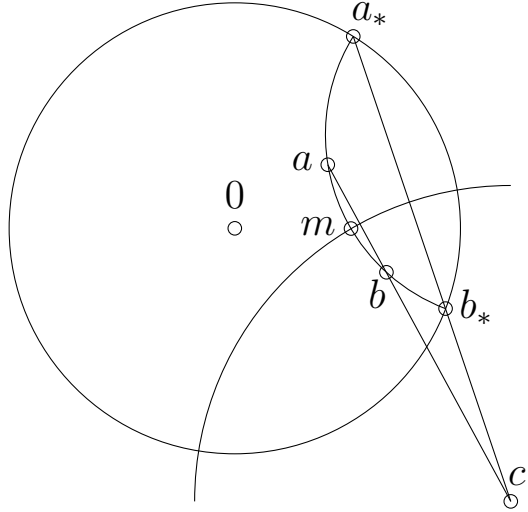

\begin{remark}
In Theorem \ref{thm_hmphinv}, the hyperbolic line $J^*[a,b]$ is orthogonal to the circle $S^1(c,\sqrt{|c|^2-1})$ because of the choice of $c$.
\end{remark}

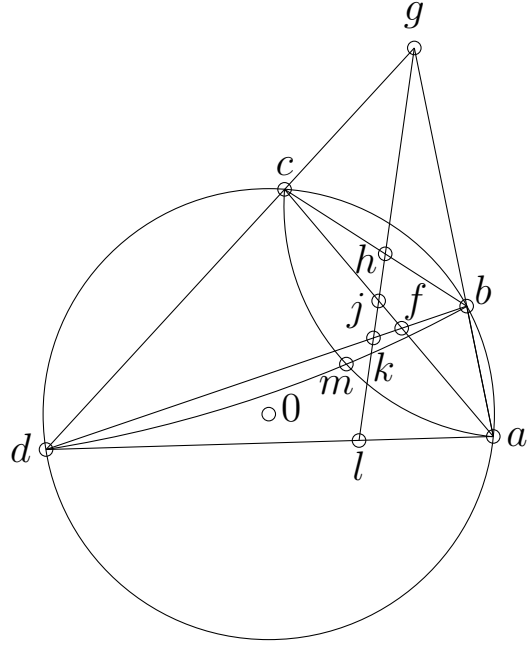
\begin{figure}[ht] %
    \begin{tikzpicture}[scale=3]
    \draw (0.995,-0.099) circle (0.3mm);
    \node[scale=1.3] at (1.1,-0.1) {$a$};
    \draw (0.877,0.479) circle (0.3mm);
    \node[scale=1.3] at (0.95,0.55) {$b$};
    \draw (0.070,0.997) circle (0.3mm);
    \node[scale=1.3] at (0.070,1.1) {$c$};
    \draw (-0.987,-0.157) circle (0.3mm);
    \node[scale=1.3] at (-1.1,-0.15) {$d$};
    \draw (0.645,1.624) circle (0.3mm);
    \node[scale=1.3] at (0.65,0.50) {$f$};
    \draw (0.344,0.222) circle (0.3mm);
    \node[scale=1.3] at (0.645,1.77) {$g$};
    \draw (0.589,0.381) circle (0.3mm);
    \node[scale=1.3] at (0.3,0.13) {$m$};
    \draw (0.516,0.711) circle (0.3mm);
    \node[scale=1.3] at (0.43,0.69) {$h$};
    \draw (0.487,0.502) circle (0.3mm);
    \node[scale=1.3] at (0.39,0.46) {$j$};
    \draw (0.464,0.338) circle (0.3mm);
    \node[scale=1.3] at (0.51,0.21) {$k$};
    \draw (0.400,-0.117) circle (0.3mm);
    \node[scale=1.3] at (0.4,-0.23) {$l$};
    \draw (0.645,1.624) -- (0.995,-0.099);
    \draw (0.645,1.624) -- (-0.987,-0.157);
    \draw (0.645,1.624) -- (0.400,-0.117);
    \draw (0.995,-0.099) -- (0.877,0.479);
    \draw (0.995,-0.099) -- (0.070,0.997);
    \draw (0.995,-0.099) --  (-0.987,-0.157);
    \draw (0.877,0.479) -- (0.070,0.997);
    \draw (0.877,0.479) -- (-0.987,-0.157);
    \draw (0,0) circle (1cm);
    \draw (0,0) circle (0.3mm);
    \node[scale=1.3] at (0.1,0.03) {$0$};
    \draw (0.070,0.997) arc (175.9:264.2:1.029);
    \draw (-0.987,-0.157) arc (279.0:298.6:5.797);
    \end{tikzpicture}
    \caption{The intersection points of Lemma \ref{lem_0fm}, Corollary \ref{cor_mhypof}, and Conjecture \ref{con_rhofgh} for $a=e^{-0.1i}$, $b=e^{0.5i}$, $c=e^{1.5i}$, $d=e^{3.3i}$, and $h=b+0.447(c-b)$.}
    \label{fig:hgf}
\end{figure}

\begin{thm}\label{myoarcisec} \cite{wvz}
Let  $a,b,c,d \in S^1$ be points listed in the order they occur when one traverses the unit circle in the positive direction.

(1) Let
$$w_1={\rm LIS}[a,b,c,d],\quad w_2={\rm LIS}[a,c,b,d],\quad w_3= {\rm LIS}[a,d,b,c]\,.$$
Then the point $w_2$ is the orthocenter of the triangle with vertices at the points $0$, $w_1$, $w_3.$

(2)
The point of intersection of the hyperbolic lines $J^*[a,c]$ and $J^*[b,d]$ is given by
\begin{equation*}\label{myoaisec}
w= \frac{(ac-bd)\pm\sqrt{(a-b)(b-c)(c-d)(d-a)}}{a-b+c-d}\,,
\end{equation*}
where the sign "$+$" or "$-$" in front of the square
root is chosen such that $|w|<1\,.$
 \end{thm}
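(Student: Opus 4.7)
My plan for part (1) is to recognize the three points $w_1,w_2,w_3$ as the vertices of the diagonal triangle of the cyclic quadrilateral with vertices $a,b,c,d\in S^1$: $w_2$ is the intersection of the two diagonals $L[a,c]$ and $L[b,d]$, while $w_1$ and $w_3$ are the intersections of the two pairs of opposite sides. The classical projective-geometric result that the diagonal triangle of a quadrilateral inscribed in a conic is self-polar with respect to that conic then says that each $w_j$ is the pole of the side of the triangle $w_1 w_2 w_3$ opposite to it with respect to $S^1$. Since the pole of any chord of a circle lies on the perpendicular dropped from the center to that chord, the origin lies on all three altitudes of the triangle $w_1 w_2 w_3$, i.e.\ $0$ is its orthocenter. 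In the resulting orthocentric system $\{0,w_1,w_2,w_3\}$ every point is the orthocenter of the triangle formed by the other three, so in particular $w_2$ is the orthocenter of the triangle $0 w_1 w_3$. A purely computational alternative uses Proposition \ref{prop_conjIntF} to write $\overline{w_j}$ explicitly, after which the two perpendicularity conditions ${\rm Re}(\overline{w_2}(w_1-w_3))=0$ and ${\rm Re}(\overline{(w_2-w_1)}w_3)=0$ reduce to polynomial identities of the type already encountered in Theorem \ref{thm_formulas}.

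For part (2), I would describe each hyperbolic line $J^*[a,c]$ (with $a,c\in S^1$) as an arc of the unique Euclidean circle through $a$ and $c$ orthogonal to $S^1$. This orthogonality forces its center to be the pole $C_1=2ac/(a+c)$ of the chord $L[a,c]$, and combined with $|C_1|^2-r_1^2=1$ it yields the circle equation
\[
(a+c)\,z\overline{z}-2ac\,\overline{z}-2z+(a+c)=0,
\]
together with the analogous equation for $J^*[b,d]$. Subtracting these two equations and dividing by $(a-b+c-d)$ gives $z\overline{z}-K\overline{z}+1=0$ with $K=2(ac-bd)/(a-b+c-d)$, hence $\overline{z}=1/(K-z)$; substituting this back into the first equation produces a quadratic in $z$ alone,
\[
(a-b+c-d)\,z^2-2(ac-bd)\,z+ab(c-d)+cd(a-b)=0,
\]
whose roots are given by the quadratic formula. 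The sign is pinned down by $|w|<1$: since each orthogonal circle is invariant under the inversion $z\mapsto 1/\overline{z}$ in $S^1$, the two roots are exchanged by that inversion, and exactly one lies in $\B^2$ whenever the two hyperbolic lines meet inside the disk.

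The main obstacle is then the algebraic identity needed to cast the discriminant of this quadratic in the factored form announced by the theorem, namely
\[
(ac-bd)^2-(a-b+c-d)\bigl(ab(c-d)+cd(a-b)\bigr)=(a-b)(b-c)(c-d)(d-a).
\]
My plan for this is to introduce the symmetric combinations $P=ac+bd$, $Q=ad+bc$, $R=ab+cd$; the factorizations $(a-b)(c-d)=P-Q$ and $(b-c)(d-a)=P-R$ rewrite the right-hand side as $P^2-P(Q+R)+QR=P^2-P(a+c)(b+d)+QR$, while a direct expansion using $(ac-bd)^2=P^2-4abcd$ and $(a-b+c-d)(ab(c-d)+cd(a-b))=P(a+c)(b+d)-bd(a+c)^2-ac(b+d)^2$ shows that the left-hand side reduces to the same expression once one checks the routine identity $-4abcd+bd(a+c)^2+ac(b+d)^2=QR$. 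In keeping with the rest of the paper, this single polynomial identity can alternatively be verified in one line by a symbolic computation system such as Risa/Asir.
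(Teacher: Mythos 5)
The paper never proves this theorem: it is quoted directly from \cite{wvz}, so there is no internal argument to compare yours against, and what you have written is a genuine addition. Judged on its own, your proof is correct. For (1), the identification of $w_1,w_2,w_3$ as the diagonal triangle of the inscribed quadrilateral, its self-polarity with respect to $S^1$, the fact that the pole of a line lies on the perpendicular from the centre to that line (note the sides of the diagonal triangle need not be chords, but the pole--polar perpendicularity holds for any line), and the orthocentric-system symmetry together give exactly the claim; the one thing you import wholesale is the self-polar diagonal triangle theorem, so in the spirit of this paper the computational alternative via Proposition \ref{prop_conjIntF} is the more self-contained of your two routes. For (2), every step checks: the centre $2ac/(a+c)$ is the pole of the chord and the orthogonality condition $|C_1|^2-r_1^2=1$ yields $(a+c)z\overline{z}-2ac\overline{z}-2z+(a+c)=0$; subtracting the two circle equations gives $z\overline{z}-K\overline{z}+1=0$ with $K=2(ac-bd)/(a-b+c-d)$; elimination produces the stated quadratic with constant term $ab(c-d)+cd(a-b)$; and the discriminant identity is a true polynomial identity --- your reduction to $-4abcd+bd(a+c)^2+ac(b+d)^2=(ad+bc)(ab+cd)$ is the right way to see it, and it does hold. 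Your sign analysis can even be made sharper: since $z_1+z_2=K$ and $\overline{z_1}=1/(K-z_1)$, one gets $z_2=1/\overline{z_1}$ immediately, so the two roots are inverse points with respect to $S^1$ and exactly one lies in $\B^2$. The only gaps are degenerate configurations that the theorem statement itself ignores: $a+c=0$ or $b+d=0$ (one hyperbolic line is a diameter, so its ``orthogonal circle'' is a line and $C_1$ is undefined) and $a+c=b+d$ (the denominator of $w$ vanishes); a careful write-up should flag these, but they do not affect the substance of the argument.
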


\begin{lemma}\label{lem_0fm}
Fix four complex points $a,b,c,d$ on the unit circle $S^1$ in this order. Let $f$ be the intersection point of the Euclidean lines $L(a,c)$ and $L(b,d)$, and $m$ the intersection point of the hyperbolic lines $J^*[a,c]$ and $J^*[b,d]$. If $f\neq0$, then $f$ and $m$ are collinear with the origin.
\end{lemma}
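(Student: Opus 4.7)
The plan is to combine the two explicit formulas already at hand. By Proposition~\ref{prop_conjIntF}, $\overline{f}=(a+c-b-d)/(ac-bd)$, and by Theorem~\ref{myoarcisec}(2),
\begin{equation*}
m=\frac{(ac-bd)+\varepsilon\sqrt{P}}{a-b+c-d}, \qquad P:=(a-b)(b-c)(c-d)(d-a),
\end{equation*}
with $\varepsilon\in\{+1,-1\}$ chosen so that $|m|<1$.

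The key observation is that the numerator of $\overline{f}$ coincides with the denominator of $m$, since $a+c-b-d=a-b+c-d$. The product therefore telescopes to
\begin{equation*}
\overline{f}\,m = 1 + \varepsilon\,\frac{\sqrt{P}}{ac-bd},
\end{equation*}
and to conclude that $0$, $f$, and $m$ are collinear it suffices to show that $\overline{f}\,m$ is real, which in turn reduces to showing that $P/(ac-bd)^2$ is a nonnegative real number.

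Reality I would verify directly using $\overline{a}=1/a$ and the analogous identities for $b,c,d$: for $x,y\in S^1$ one has $\overline{x-y}=-(x-y)/(xy)$, from which a routine calculation yields $\overline{P}=P/(abcd)^2$ and $\overline{(ac-bd)^2}=(ac-bd)^2/(abcd)^2$, so $P/(ac-bd)^2$ equals its own complex conjugate. For the sign I would parametrize $a=e^{i\alpha}$, $b=e^{i\beta}$, $c=e^{i\gamma}$, $d=e^{i\delta}$ with $\alpha<\beta<\gamma<\delta<\alpha+2\pi$ and use $e^{iu}-e^{iv}=2ie^{i(u+v)/2}\sin\tfrac{u-v}{2}$ to obtain
\begin{equation*}
\frac{P}{(ac-bd)^2}=\frac{-4\,\sin\tfrac{\alpha-\beta}{2}\sin\tfrac{\beta-\gamma}{2}\sin\tfrac{\gamma-\delta}{2}\sin\tfrac{\delta-\alpha}{2}}{\sin^2\tfrac{\alpha+\gamma-\beta-\delta}{2}},
\end{equation*}
whose four sine factors in the numerator carry signs $-,-,-,+$, making the whole expression strictly positive.

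The sign check is the only genuine obstacle: the conjugation argument delivers reality but not positivity, and the strict positivity really does use the cyclic ordering of $a,b,c,d$ on $S^1$---precisely the hypothesis that also guarantees the hyperbolic geodesics $J^*[a,c]$ and $J^*[b,d]$ actually meet inside $\B^2$. Once $P/(ac-bd)^2>0$ is established, $\sqrt{P}/(ac-bd)\in\mathbb{R}$, hence $\overline{f}\,m\in\mathbb{R}$, and the collinearity of $0$, $f$, and $m$ follows.
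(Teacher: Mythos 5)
Your proof is correct and rests on the same central computation as the paper's: both form the product $\overline{f}\,m$, use Proposition~\ref{prop_conjIntF} and Theorem~\ref{myoarcisec}(2) to telescope it to $1\pm\sqrt{P}/(ac-bd)$ with $P=(a-b)(b-c)(c-d)(d-a)$, and then argue that the square-root term is real. Where you diverge is in that last step. The paper expands $P$ in terms of the symmetric combinations $ac+bd$, $ab+cd$, $ad+bc$, shows that each of $(ac+bd)/(ac-bd)$, $(ab+cd)/(ac-bd)$, $(ad+bc)/(ac-bd)$ is purely imaginary, and concludes that $P/(ac-bd)^2$ is a sum of products of purely imaginary numbers, hence real. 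You obtain the same reality statement more directly from $\overline{x-y}=-(x-y)/(xy)$ for $x,y\in S^1$, and then add something the paper omits: an explicit verification that $P/(ac-bd)^2\ge 0$, via the parametrization $a=e^{i\alpha},\dots$ and $e^{iu}-e^{iv}=2ie^{i(u+v)/2}\sin\frac{u-v}{2}$. This is not a cosmetic addition. Reality of $P/(ac-bd)^2$ alone does not make $\sqrt{P}/(ac-bd)$ real --- a negative real value under the square root would make $\overline{f}\,m$ non-real --- and the paper's proof passes over this silently, whereas you correctly identify it as the one genuine obstacle and resolve it using the cyclic ordering of $a,b,c,d$ on $S^1$ (your sign count $-,-,-,+$ for the four sine factors, against the factor $-4=(2i)^4/(2i)^2$, is right). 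So your argument is, if anything, slightly more complete than the published one.
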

\begin{proof}
The points $m$ and $f$ are collinear with the origin, if and only if $m\slash f\in\R\backslash\{0\}$ or, equivalently, $m\overline{f}\in\R\backslash\{0\}$. Here, $m=0$ or $\overline{f}=0$ is possible only in the special case $f=m=0$ and, since $f\neq0$, we only need to show $m\overline{f}\in\R$. Theorem \ref{myoarcisec}(2) gives an explicit formula for $m$ and the complex conjugate of $f$ is as in Proposition \ref{prop_conjIntF}. By using these formulas, we will have
\begin{align}\label{quoRoot}
m\overline{f}
=1\pm\frac{\sqrt{(a-b)(b-c)(c-d)(d-a)}}{ac-bd}.
\end{align}
The expression under the square root can be written as
\begin{align*}
&(a-b)(b-c)(c-d)(d-a)
=(ac+bd-ad-bc)(ac+bd-ab-cd)\\
&=(ac+bd)^2-(ac+bd)(ab+cd)-(ac+bd)(ad+bc)+(ab+cd)(ad+bc).
\end{align*}
Hence, the quotient in \eqref{quoRoot} is 
\begin{align*}
\sqrt{\left(\frac{ac+bd}{ac-bd}\right)^2-\frac{(ac-bd)(ab+cd)}{(ac-bd)^2}-\frac{(ac-bd)(ad+bc)}{(ac-bd)^2}+\frac{(ab+cd)(ad+bc)}{(ac-bd)^2}}
\end{align*}
Because $z\overline{z}=|z|^2=1$ for all $z\in S^1$ and $y-\overline{y}\in i\R$ for all $y\in i\R$, we will have
\begin{align*}
\frac{ac+bd}{ac-bd}
&=\frac{(ac+bd)\overline{(ac-bd)}}{|ac-bd|^2}
=\frac{\overline{ac}bd-ac\overline{bd}}{|ac-bd|^2}
\in i\R,\\
\frac{ab+cd}{ac-bd}
&=\frac{(ab+cd)\overline{(ac-bd)}}{|ac-bd|^2}
=\frac{\overline{a}d-a\overline{d}+b\overline{c}-b\overline{c}}{|ac-bd|^2}\in i\R,\\
\frac{ad+bc}{ac-bd}
&=\frac{(ad+bc)\overline{(ac-bd)}}{|ac-bd|^2}
=\frac{\overline{a}b-a\overline{b}+\overline{c}d-c\overline{d}}{|ac-bd|^2}\in i\R.
\end{align*}
Consequently, it follows that the quotient in \eqref{quoRoot} consists of such products of imaginary numbers that are all real numbers and therefore $m\overline{f}$ is also a real number.
\end{proof}

\begin{corollary}\label{cor_mhypof}
If $a,b,c,d$ are four complex points on the unit circle $S^1$ in this order so that the intersection point $f$ of the Euclidean lines $L(a,c)$ and $L(b,d)$ is not the origin, then the intersection point $m$ of the hyperbolic lines $J^*[a,c]$ and $J^*[b,d]$ is the hyperbolic midpoint of the origin and the point $f$.
\end{corollary}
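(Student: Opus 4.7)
The plan is to invoke Lemma \ref{lem_0fm} to get that $0, m, f$ are collinear, and then reduce the claim to a clean algebraic identity. Since the three points are collinear and $0$ lies on the line, they all lie on a Euclidean diameter of $\B^2$, which is itself a hyperbolic geodesic. Along such a diameter one has ${\rm th}(\rho_{\B^2}(0,z)/2) = |z|$, and an application of the double-angle identity for ${\rm th}$ shows that ``$m$ is the hyperbolic midpoint of $0$ and $f$'' is equivalent to the single complex identity $f(1+|m|^2) = 2m$. This identity simultaneously encodes the modulus relation $|f|(1+|m|^2) = 2|m|$ and the same-side condition $f/m = 2/(1+|m|^2) > 0$, so it suffices to prove it.

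For this I will use Theorem \ref{myoarcisec}(2). Setting $P = ac$, $Q = bd$, $T = a+c$, $U = b+d$, I rewrite the formula $m = ((P-Q) \pm \Delta)/(T-U)$, where $\Delta^2 = (a-b)(b-c)(c-d)(d-a)$, by squaring $m(T-U) - (P-Q) = \pm\Delta$ and using the polynomial identity
\begin{equation*}
\Delta^2 = (P-Q)^2 + (T-U)(QT-PU),
\end{equation*}
which is verified by a short expansion (it reduces to $(ab+cd)(ad+bc) = bd(a+c)^2 + ac(b+d)^2 - 4abcd$). This shows that $m$ satisfies the quadratic
\begin{equation*}
(T-U)m^2 - 2(P-Q)m - (QT-PU) = 0 \qquad (E1).
\end{equation*}
Because $a,b,c,d \in S^1$ and so $\overline{z} = 1/z$ for each, the conjugation rules $\overline{T-U} = (QT-PU)/(abcd)$, $\overline{P-Q} = -(P-Q)/(abcd)$, and $\overline{QT-PU} = (T-U)/(abcd)$ turn the conjugate of $(E1)$, after multiplication by $abcd$, into
\begin{equation*}
(QT-PU)\overline{m}^2 + 2(P-Q)\overline{m} - (T-U) = 0 \qquad (E2).
\end{equation*}

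The endgame is to combine $(E1)$ and $(E2)$. Substituting $QT-PU = (T-U)m^2 - 2(P-Q)m$ from $(E1)$ into $(E2)$ and factoring out the nonzero factor $|m|^2 - 1$ yields $(1+|m|^2)(T-U) = 2\overline{m}(P-Q)$. Proposition \ref{prop_conjIntF} (or direct use of \eqref{eq:lis}) gives $f = (QT-PU)/(Q-P)$, so $QT-PU = -f(P-Q)$, and $(E1)$ rearranges to $(P-Q)/(T-U) = m^2/(2m-f)$. Combining these two relations eliminates $(P-Q)$ and $(T-U)$ altogether and yields $(1+|m|^2)(2m-f) = 2|m|^2 m$, which simplifies immediately to $f(1+|m|^2) = 2m$.

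The main obstacle is just organizing the polynomial bookkeeping: verifying the identity for $\Delta^2$ and tracking the conjugation that produces $(E2)$. The reason everything closes so cleanly is the duality between $(T-U)$ and $(QT-PU)$ and between $m$ and $\overline{m}$ exhibited by $(E1)$ and $(E2)$, a direct consequence of $a,b,c,d$ lying on $S^1$; once this duality is in hand, the rest is pure substitution.
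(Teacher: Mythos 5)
Your argument is correct, but it takes a genuinely different route from the paper's. The paper's proof is essentially a reduction to an external result: after invoking Lemma \ref{lem_0fm} for collinearity, it observes that $|m|<|f|$ (the orthogonal arc $J^*[a,c]$ lies inside the chord $[a,c]$), identifies $m$ as $[0,f]\cap J^*[a,c]$ with $f\in[a,c]\cap\B^2$, and then cites Proposition 3.1(1) of Vuorinen--Wang \cite{vw} to conclude the midpoint property. You instead give a self-contained algebraic verification: you extract from Theorem \ref{myoarcisec}(2) the quadratic $(T-U)m^2-2(P-Q)m-(QT-PU)=0$, exploit the conjugation duality coming from $a,b,c,d\in S^1$ to obtain its conjugate equation, and combine the two with $f=(QT-PU)/(Q-P)$ to reach $f(1+|m|^2)=2m$, which (given $|m|<1$, guaranteed by the sign choice in Theorem \ref{myoarcisec}(2)) is indeed equivalent to the midpoint claim via ${\rm th}(\rho_{\B^2}(0,z)/2)=|z|$ and the double-angle formula; one can also check it directly against \eqref{myzformua}, since the midpoint of $0$ and $f$ is $f/(1+\sqrt{1-|f|^2})$. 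I verified your identity $\Delta^2=(P-Q)^2+(T-U)(QT-PU)$ and the subsequent eliminations; the divisions by $T-U$, $|m|^2-1$, and $2m-f$ are all legitimate under the hypothesis $f\neq 0$ (note $T=U$ would force $f=0$ by Proposition \ref{prop_conjIntF}), though you should say so explicitly. What your approach buys is independence from the external reference \cite{vw} and, as a by-product, an independent proof of the collinearity of Lemma \ref{lem_0fm} (your identity already forces ${\rm Arg}\,f={\rm Arg}\,m$, so that invocation is redundant); what it costs is the polynomial bookkeeping that the paper's two-line geometric argument avoids.
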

\begin{proof}
Since $f\neq0$, the origin cannot be on both of the lines $L(a,c)$ and $L(b,d)$, which means that at least one of the points pairs $a,c$ and $b,d$ are non-collinear with the origin. By symmetry, we can suppose that $a,c$ are non-collinear with the origin. By Lemma \ref{lem_0fm}, the points $f$ and $m$ are collinear with the origin and the inequality $|m|<|f|$ must hold because $J^*[a,c]$ is an arc of a circle orthogonal to the unit circle $S^1$. Consequently, $m=[0,f]\cap J^*[a,c]$, where the point $f$ trivially fulfills $f\in[a,c]\cap\B^2$. The corollary follows from \cite[Prop. 3.1(1), p. 447]{vw}.
\end{proof}

\begin{conjecture}\label{con_rhofgh}
Let $a,b,c,d$ be four complex points on the unit circle $S^1$ in this order so that $L(a,b)$ and $L(c,d)$ are not parallel. Let $h$ be an arbitrary point on the Euclidean segment $[b,c]$, and fix then $g={\rm LIS}[a,b,c,d]$, $j={\rm LIS}[g,h,a,c]$, $k={\rm LIS}[g,h,b,d]$, and $l={\rm LIS}[g,h,a,d]$. Note that the special case $j=k$ is possible. Now, $\rho_{\mathbb{B}^2}(h,j)=\rho_{\mathbb{B}^2}(k,l)$. See Figure \ref{fig:hgf}.   
\end{conjecture}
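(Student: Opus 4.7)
\bigskip
\noindent\textbf{Proof plan.} The plan is to reduce the conjecture to a cross-ratio identity on the common line $L=L(g,h)$ containing $g,h,j,k,l$, via the classical Desargues involution theorem for a pencil of conics, and then to translate that cross-ratio identity into the hyperbolic-distance equality by an explicit parameterization of the chord $L$.

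First I would set up the involution. Let $\{u,v\}=L\cap S^1$. The four points $a,b,c,d$ determine a pencil of conics containing $S^1$ together with the three singular conics $L(a,b)\cup L(c,d)$, $L(a,c)\cup L(b,d)$, and $L(a,d)\cup L(b,c)$. Desargues' involution theorem says that the pairs of points cut out on $L$ by the conics in this pencil form a projective involution $\sigma$ on $L$; in our configuration these pairs are $(g,g)$ (since $L$ passes through $g=L(a,b)\cap L(c,d)$), $(j,k)$, $(h,l)$, and $(u,v)$. Hence $\sigma$ fixes $g$ and swaps $u\leftrightarrow v$, $j\leftrightarrow k$, and $h\leftrightarrow l$. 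Since $\sigma$ is a M\"obius involution of $L$ and therefore preserves cross-ratios, using the symmetry $(a,b;c,d)=(b,a;d,c)$ I obtain
\[
(u,v;h,j)=(\sigma u,\sigma v;\sigma h,\sigma j)=(v,u;l,k)=(u,v;k,l).
\]

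Next, I would convert this into a hyperbolic-distance identity. Parameterize $L$ by $t\in\R$ via $P(t)=\tfrac{1+t}{2}u+\tfrac{1-t}{2}v$, so that $t=\pm 1$ corresponds to $u,v$. Since $|u|=|v|=1$, a short computation gives $1-|P(t)|^2=\lambda(1-t^2)$ and $|P(t_1)-P(t_2)|^2=\lambda(t_1-t_2)^2$ with $\lambda=|u-v|^2/4>0$, so
\[
{\rm sh}^2\frac{\rho_{\B^2}(P_1,P_2)}{2}=\frac{(t_1-t_2)^2}{\lambda(1-t_1^2)(1-t_2^2)}.
\]
Setting $A=(1-t_1)(1+t_2)$ and $B=(1-t_2)(1+t_1)$, the cross-ratio is $(u,v;P_1,P_2)=A/B$, while $AB=(1-t_1^2)(1-t_2^2)$ and $A-B=2(t_2-t_1)$, so
\[
4\lambda\,{\rm sh}^2\frac{\rho_{\B^2}(P_1,P_2)}{2}=\frac{(A-B)^2}{AB},
\]
which depends only on the ratio $A/B$. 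Applying this to the pairs $(h,j)$ and $(k,l)$ and inserting the cross-ratio equality above yields $\rho_{\B^2}(h,j)=\rho_{\B^2}(k,l)$.

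I expect the principal obstacle to be the invocation of the conic form of Desargues' involution theorem (classical, but outside the main thrust of this paper and likely needing a citation or a short self-contained derivation from the pencil of conics through $a,b,c,d$). A fully computational alternative would be to express $h,j,k,l$ via \eqref{eq:lis} in terms of $a,b,c,d$ and verify the equivalent algebraic identity $|h-j|^2(1-|k|^2)(1-|l|^2)=|k-l|^2(1-|h|^2)(1-|j|^2)$ symbolically (as in the proof of Theorem \ref{thm_5pointschordal}), at the cost of losing the conceptual clarity of the Desargues argument.
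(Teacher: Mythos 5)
You should be aware that the paper does not prove this statement at all: it appears as Conjecture \ref{con_rhofgh}, accompanied only by a remark reporting numerical evidence that $|T_{g^*}(h)|=|T_{g^*}(l)|$ and $|T_{g^*}(j)|=|T_{g^*}(k)|$ after the map $T_{g^*}$ of \eqref{myT} sends $g$ to $\infty$. Your argument, as far as I can verify it, actually settles the conjecture, by a route the authors did not take. The Desargues set-up is correct: the three degenerate conics of the pencil through $a,b,c,d$ cut $L=L(g,h)$ in the pairs $(g,g)$ (since $L\ni g=L(a,b)\cap L(c,d)$), $(j,k)$ and $(h,l)$, while $S^1$ cuts it in $(u,v)$; the resulting involution $\sigma$ is a projectivity of $L$, so $(u,v;h,j)=(\sigma u,\sigma v;\sigma h,\sigma j)=(v,u;l,k)=(u,v;k,l)$ by the double-transposition symmetry of the cross ratio. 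The chord computation also checks out: $1-|P(t)|^2=\lambda(1-t^2)$ and $|P(t_1)-P(t_2)|^2=\lambda(t_1-t_2)^2$ give $4\lambda\,{\rm sh}^2\tfrac{\rho_{\B^2}(P_1,P_2)}{2}=\tfrac{(A-B)^2}{AB}=\tfrac{A}{B}+\tfrac{B}{A}-2$, a function of the cross ratio alone, and \eqref{myrho} applies to arbitrary pairs in $\B^2$, so it is irrelevant that the chord is not a geodesic. Two points should be made explicit in a final write-up: the statement implicitly needs $h$ in the open segment $(b,c)$ and $j,k,l\in\B^2$ for the distances to be finite — this is also what guarantees $|t_i|<1$ and that $L$ avoids the base points $a,b,c,d$, as Desargues' theorem requires — and the involution theorem itself needs a citation (it is classical; Berger \cite{ber} suffices) or a short self-contained derivation. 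Finally, your proof explains the authors' numerical observation: in a coordinate on $L$ placing the fixed point $g$ at infinity, $\sigma$ becomes $x\mapsto c-x$, which is precisely the statement that $h,l$ and $j,k$ share a common midpoint, i.e.\ the equal-modulus relations seen after applying $T_{g^*}$. So your approach not only proves the conjecture but subsumes the strategy sketched in the paper's remark.
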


\begin{remark}
To prove Conjecture \ref{con_rhofgh}, it might be useful to consider the M\"obius transformation $T_{g^*}$ from \eqref{myT}, where $g^*=1/\overline{g}$. It preserves the unit disk and circle, but it maps $g$ to the infinity. Numerical tests suggest that $|T_{g^*}(h)|=|T_{g^*}(l)|$ and $|T_{g^*}(j)|=|T_{g^*}(k)|$ and, since the mapped points $T_{g^*}(h),T_{g^*}(j),T_{g^*}(k),T_{g^*}(l)$ are collinear, this would be enough to prove that $\rho_{\mathbb{B}^2}(T_{g^*}(h),T_{g^*}(j))=\rho_{\mathbb{B}^2}(T_{g^*}(k),T_{g^*}(l))$, from which the result follows because of the conformal invariance of the hyperbolic metric.
\end{remark}

\bigskip

\begin{figure}
    \centering
    \includegraphics[scale=0.6]{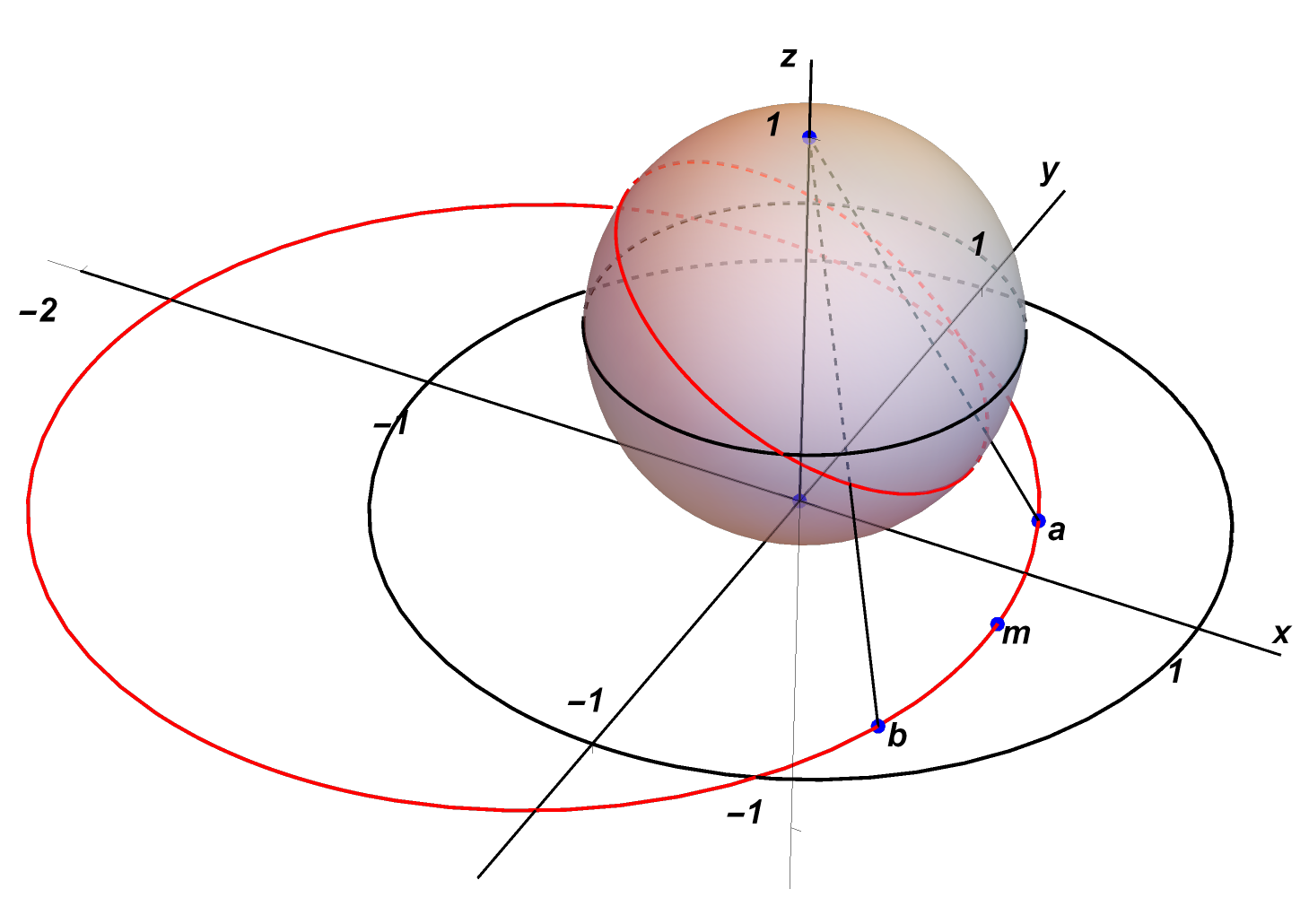}
    \caption{Two points $a,b$ in the unit disk, their chordal midpoint $m$, and the stereographic projection of the great circle passing through $a,b,m$ on the complex plane with the Riemann sphere. This figure is a courtesy of Heikki Ruskeep\"a\"a \cite{r09}.}
    \label{fig_cmp}
\end{figure}

\bigskip

\begin{theorem}\label{thm_cmp}
Let points $ a $ and $ b $ be points on $\mathbb{C} $ 
and assume that the corresponding $ \pi(a)$ and $ \pi(b) $ 
are not symmetric with respect to the center $ (0,0,\frac12) $.
For $ a,b $, the chordal midpoint $ m $ is given by
$$
  m=\frac{a(1+|b|^2)+b(1+|a|^2)}
         {|1+a\overline{b}|\sqrt{(1+|a|^2)(1+|b|^2)}-|ab|^2+1}.
$$
Moreover, the stereographic projection of the great circle passing through 
$ \pi(a) $ and $ \pi(b) $ is given by
$$
   \bigg| z-\frac{a(1-|b|^2)-b(1-|a|^2)}{b\overline{a}-a\overline{b}} \bigg|
      =\frac{|a-b||1+a\overline{b}|}{|a\overline{b}-\overline{a}b|}.
$$
\end{theorem}
\begin{proof}
Let $ a,b\in\mathbb{C} $.
Recall from \eqref{riePoint} that the points on the Riemann sphere $ \pi(a) , \pi(b) $
corresponding to the two points 
$ a $ and $ b $ are respectively given by
$$ 
  \pi(a)=\Big(\frac{\mbox{Re}\,a}{1+|a|^2},\frac{\mbox{Im}\,a}{1+|a|^2},
            \frac{|a|^2}{1+|a|^2}\Big), \quad \mbox{and}\quad
  \pi(b)=\Big(\frac{\mbox{Re}\,b}{1+|b|^2},\frac{\mbox{Im}\,b}{1+|b|^2},
            \frac{|b|^2}{1+|b|^2}\Big). 
$$
The chordal midpoint $ m $ is given by the projection of
the midpoint $ \pi(m)$ on the $ \mbox{arc}(\pi(a), \pi(b)) $ 
of the circle with center $ (0,0,\frac12) $ which passes through 
$ \pi(a) $ and $ \pi(b) $, where the $ \mbox{arc}(\pi(a),\pi(b)) $
is chosen to be a smaller of the two.
The point $ \pi(m) $ is the point on the line passing through 
the Euclidean midpoint $ (\pi(a)+\pi(b))/2 $ and $ (0,0,\frac12) $ 
whose distance from $ (0,0,\frac12) $ is $ 1/2 $.
Therefore, $ \pi(m) $ is given as follows:
\begin{align*}
  &\Big( \frac14\frac{\mbox{Re}\,a(1+|b|^2)+\mbox{Re}\,b(1+|a|^2)}
               {(1+|a|^2)(1+|b|^2)L},\,
        \frac14\frac{\mbox{Im}\,a(1+|b|^2)+\mbox{Im}\,b(1+|a|^2)}
               {(1+|a|^2)(1+|b|^2)L},\\
         &\frac14\frac{1+|ab|^2}{(1+|a|^2)(1+|b|^2)L}+\frac12\Big),    
\end{align*}
where $ L^2=\dfrac14\dfrac{(1+a\overline{b})(1+\overline{a}b)}
                        {(1+|a|^2)(1+|b|^2)} $
and $ L $ is chosen so that $ \pi(m) $ is the point on a 
smaller $ \mbox{arc}(\pi(a),\pi(b)) $.
So, we can assume $L>0$ since such $ \pi(m) $ is in the same direction as the Euclidean midpoint $ (\pi(a)+\pi(b))/2 $ from the center
$ (0,0,\frac12) $.

The point on the complex plane corresponding to the above point is the chordal midpoint $ m $ given by
$$
  m=\frac{a(1+|b|^2)+b(1+|a|^2)}
         {2(1+|a|^2)(1+|b|^2)L-|ab|^2+1}
   =\frac{a(1+|b|^2)+b(1+|a|^2)}
         {|1+a\overline{b}|\sqrt{(1+|a|^2)(1+|b|^2)}-|ab|^2+1}.
$$

To prove the second part of the theorem, note that the equation of the plane $ \{PL=0\}\subset \mathbb{R}^3 $ passing through 
$ \pi(a)$, $\pi(b)$, and $ (0,0,\frac12) $ is given by
\begin{align*}
  PL=& \big(-\mbox{Im}\,{a}(1-|b|^2)+\mbox{Im}\,{b}(1-|a|^2)\big)\xi
   +\big(\mbox{Re}\,{a}(1-|b|^2)-\mbox{Re}\,{b}(1-|a|^2)\big)\eta \\
  &\quad
    +(\mbox{Im}\,{b}\cdot\mbox{Re}\,a-\mbox{Re}\,{b}\cdot\mbox{Im}\,{a})
      (2\zeta-1)=0.
\end{align*}
The great circle with center $ (0,0,\frac12) $ which passes
through $ \pi(a) $ and $ \pi(b) $ is given as the intersection of 
the plane $ PL=0 $ and the sphere $ \xi^2+\eta^2+(\zeta-\frac12)^2=1/4 $.

Consider the system of equations
\begin{equation}\label{eq:sys}
   PL=0,\quad  \xi^2+\eta^2+(\zeta-\frac12)^2=1/4, \quad
   \mbox{Re}\,z=\frac{\xi}{1-\zeta}, \mbox{\quad and \quad}
   \mbox{Im}\,z=\frac{\eta}{1-\zeta}.
\end{equation}
By eliminating $ \xi,\eta,\zeta $ from the above equations, we have
\begin{equation}\label{eq:g-circle}
 (-a\overline{b}+\overline{a}b)z\overline{z}
  +(a\overline{a}\overline{b}-\overline{a}b\overline{b}+\overline{a}
    -\overline{b})z+\big(\big((-\overline{a}+\overline{b})b-1\big)a+b\big)
   \overline{z}+a\overline{b}-\overline{a}b=0.
\end{equation}
The equation \eqref{eq:g-circle} is obtained from the calculation of the Gr\"obner basis by Risa/Asir the symbolic computation system.
This equation can be written as
$$
   \bigg| z-\frac{a(1-|b|^2)-b(1-|a|^2)}{b\overline{a}-a\overline{b}} \bigg|
      =\frac{|a-b||1+a\overline{b}|}{|a\overline{b}-\overline{a}b|},
$$
which gives the equation of the stereographic projection of the intersection $ PL=0 $ and the sphere.

Let us yet prove the last part of the theorem. The equation of the plane $ \{PO=0\}\subset \mathbb{R}^3 $ 
orthogonal to the line passing through
$ \pi(a) $ and $ \pi(b) $ that passes through $ \pi(m) $ is
\begin{align*}
  PO & =2\big(\mbox{Re}\, b(1+|a|^2)-\mbox{Re}\, a(1+|b|^2)\big)\xi
      +2\big(\mbox{Im}\, b(1+|a|^2)-\mbox{Im}\,a(1+|b|^2)\big)\eta\\
     &\qquad
         +2(-|a|^2+|b|^2)\zeta+|a|^2-|b|^2=0.
\end{align*}
Consider the system of equations
\begin{equation}\label{eq:sys2}
   PO=0,\quad  \xi^2+\eta^2+(\zeta-\frac12)^2=1/4, \quad
   \mbox{Re}\,z=\frac{\xi}{1-\zeta}, \mbox{\quad and \quad}
   \mbox{Im}\,z=\frac{\eta}{1-\zeta}.
\end{equation}
By eliminating $ \xi,\eta,\zeta $ from the above equations, we have
$$
   (-|a|^2+|b|^2)z\overline{z}+(\overline{b}(1+|a|^2)-\overline{a}(1+|b|^2))z
             +(b(1+|a|^2)-a(1+|b|^2))\overline{z}+|a|^2-|b|^2=0.
$$
This equation can be written as
$$ 
   \bigg|z-\frac{b(1+|a|^2)-a(1+|b|^2)}{|a|^2-|b|^2}\bigg|
   =\frac{|a-b|\sqrt{(1+|a|^2)(1+|b|^2)}}{\big||a|^2-|b|^2\big|},
$$
and it gives the equation of the stereographic projection of the intersection $ PO=0 $ and the sphere.
\end{proof}

\newpage 
\section{Appendix: Computer algebra and Gr\"obner bases with Risa/Asir}

The proofs in Sections 3-5 require finding the points of intersection of circles and lines in a form as simple as possible. Hence we have a system of polynomial equations which we want to solve exactly. Manual computations are often very difficult due to complicated formulas. Therefore we use the Gr\"obner base method \cite{cls} from computer algebra as a tool and combine it with manual processing. We will here outline our procedure.

\begin{nonsec}{\bf Gr\"obner bases and Risa/Asir} \label{grb} \end{nonsec}
Our procedure using the Gr\"obner bases has usually the following steps
\begin{enumerate}
\item[(1)] Setting up the polynomial equations $f_1=f_2=\cdots=f_n=0$ and defining the ideal $I=\left< f_1,f_2,\cdots,f_n\right>$
\item[(2)] Set variable ordering
\item[(3)] Choice of the monomial ordering (The lexicographic ordering is the most convenient for solving polynomial equations)
\item[(4)] Finding the Gr\"obner basis $G$ for the ideal $I$
\item[(5)] Choose the elimination ideal from $G$
\item[(6)] Manual postprocessing
\end{enumerate}

In this appendix, we show how we used the symbolic computation system Risa/Asir \cite{risa} to find equation 
\eqref{eq:g-circle}. Similar calculations could be performed also by using Mathematica, Maple, or some other system.

Let {\tt z.re}, {\tt z.im}, and {\tt zb} denote the real, 
imaginary, and complex conjugate of {\tt z} respectively.
Then {\tt z = z.re + i*z.im} represents a point on $\mathbb{C}$,
Setting $ \xi=${\tt xx}, $ \eta=${\tt yy}, $ \zeta=${\tt zz},
the system of equation \eqref{eq:sys}
can be expressed in Risa/Asir as follows. Below, "ord" is a command that specifies the order of variables in the output polynomial and "nm" is a command that returns a numerator.

\medskip

\noindent
\newbox\SomeBox
\begin{lrbox}{\SomeBox}
  \begin{minipage}{0.95\textwidth}
\begin{verbatim}
[2062] ord([z,zb,z.re,z.im])$
[2063] PL=(b.im*(1-a*ab)-a.im*(1-b*bb))*xx
        +(a.re*(1-b*bb)-b.re*(1-a*ab))*yy
        +(2*zz-1)*(b.im*a.re-b.re*a.im)$
[2064] RS=xx^2+yy^2+(zz-1/2)^2-1/4$
[2065] Z.xx=nm(xx/(1-zz)-z.re)$
[2066] Z.yy=nm(yy/(1-zz)-z.im)$
\end{verbatim}
  \end{minipage}
\end{lrbox}
\fbox{\usebox{\SomeBox}}

\medskip

Above, {\tt [20**]} are input prompts.

To eliminate {\tt xx}, {\tt yy}, and {\tt zz} from the system of equation, we use the following method.

Let {\tt I} be the ideal generated by polynomials {\tt PL}, {\tt RS}, {\tt Z.xx}, {\tt Z.yy}. We can compute the Gr\"obner basis for the ideal {\tt I} by using the following block order
$$\mbox{\tt  [xx,yy,zz] > [a.re,a.im,b.re,b.im, a,ab,b,bb,z.re,z.im]}. $$
by

\medskip

\noindent
\newbox\SomeBox
\begin{lrbox}{\SomeBox}
  \begin{minipage}{0.95\textwidth}
\begin{verbatim}
[2067] G=nd_gr([PL,RS,Z.xx,Z.yy],
        [xx,yy,zz, a.re,a.im,b.re,b.im, a,ab,b,bb,z.re,z.im],
        0,[[0,3],[0,10]])$
\end{verbatim}
  \end{minipage}
\end{lrbox}
\fbox{\usebox{\SomeBox}}

\medskip

Then, we need to choose the polynomials that belong to the polynomial ring
$$ \mathbb{C}[\mbox{\tt a.re,a.im,b.re,b.im, a,ab,b,bb,z.re,z.im}] $$
from the elements of the Gr\"obner basis {\tt G}.

\medskip

\noindent
\newbox\SomeBox
\begin{lrbox}{\SomeBox}
  \begin{minipage}{0.95\textwidth}
\begin{verbatim}
[2068] length(G);
12
[2069] vars(G[0]);
[a,b,b.im,ab,a.im,bb,a.re,b.re,z.re,z.im]
[2070] vars(G[1]);
[b.im,a.im,a.re,b.re,zz,z.re,z.im]
\end{verbatim}
  \end{minipage}
\end{lrbox}
\fbox{\usebox{\SomeBox}}

\medskip

From the above calculation, we can see that the Gr\"obner basis {\tt G}
is generated by 12 polynomials and that the first polynomial {\tt G[0]}
in the list of elements is the required one. 
(The second {\tt G[1]} and subsequent polynomials in the list are polynomials 
containing variables to be eliminated such as {\tt zz}.)
We can obtain the desired elimination ideal {\tt <G[0]>}.

We then extract the required factor from {\tt G[0]}.

\medskip

\noindent
\newbox\SomeBox
\begin{lrbox}{\SomeBox}
  \begin{minipage}{0.95\textwidth}
\begin{verbatim}
[2071] fctr(G[0]);
[[1,1],[a.re*b.im-b.re*a.im,1],
 [(a.re*b.im-b.re*a.im)*z.re^2+(-ab*b.im*a+bb*a.im*b+b.im-a.im)*z.re
 +(a.re*b.im-b.re*a.im)*z.im^2+(b.re*ab*a-a.re*bb*b+a.re-b.re)*z.im
 -a.re*b.im+b.re*a.im,1]]
\end{verbatim}
  \end{minipage}
\end{lrbox}
\fbox{\usebox{\SomeBox}}

\medskip

From the assumption, the points $a$, $b$ and $0$ are not collinear, 
so {\tt a.re*b.im-b.re*a.im}
$=\mbox{Re}(a)\mbox{Im}(b)-\mbox{Re}(b)\mbox{Im}(a) \neq0 $.
Therefore, the second factor is zero.
As the second factor forms the equation of a circle on $ \mathbb{R}^2 $, 
we need to rewrite it as the equation on $ \mathbb{C}$, as follows.

\medskip

\noindent
\newbox\SomeBox
\begin{lrbox}{\SomeBox}
  \begin{minipage}{0.95\textwidth}
\begin{verbatim}
[2072] Circ=car(@2071[2]);
(a.re*b.im-b.re*a.im)*z.re^2+(-ab*b.im*a+bb*a.im*b+b.im-a.im)*z.re
 +(a.re*b.im-b.re*a.im)*z.im^2+(b.re*ab*a-a.re*bb*b+a.re-b.re)*z.im
 -a.re*b.im+b.re*a.im
[2073] GCircAB=2*@i*subst(Circ,a.re,(a+ab)/2,b.re,(b+bb)/2,
    a.im,(a-ab)/(2*@i),b.im,(b-bb)/(2*@i),z.re,
    (z+zb)/2,z.im,(z-zb)/(2*@i));
((-bb*a+ab*b)*zb+bb*ab*a-bb*ab*b+ab-bb)*z+(((-ab+bb)*b-1)*a+b)*zb
 +bb*a-ab*b
\end{verbatim}
  \end{minipage}
\end{lrbox}
\fbox{\usebox{\SomeBox}}

\medskip

We have now obtained the equation \eqref{eq:g-circle}.

\bigskip

Note that the equation $F(z;a,b,c,d)=0$ 
that determines the $\mbox{GCIS}[a,b,c,d]$ is given by eliminating 
{\tt zb} from {\tt GCircAB} and {\tt GCircCD},
where
\begin{verbatim}
   GCircCD=((-c*db+cb*d)*zb+(cb*c-cb*d-1)*db+cb)*z
            +(d*c*db+(-cb*d-1)*c+d)*zb+c*db-cb*d.
\end{verbatim}
In general, the solution to the equation that eliminates 
$ \mbox{\tt zb}=\overline{z} $ from the equation of two circles 
may not make sense.
For example, for two circles $ |z-2|=1 $ and $ |z-i|=1 $, 
eliminating $\overline{z}$ from the general forms 
$$ z\overline{z}-2z-2\overline{z}+3=0\quad \mbox{and}\quad
   z\overline{z}+iz-i\overline{z}=0 $$
gives the equation 
$$
  (2+i)z^2-(3+4i)z+3i=0.
$$
But, clearly, the solution to this equation satisfies neither $|z-2|=1$
nor $ |z-i|=1$.
This happens because these two circles do not intersect.

Now, we return to the topic of circles on the Riemann sphere.
Two great circles on the Riemann sphere have exactly two 
intersection points unless they coincide.
Thus, the two solutions of $F(z;a,b,c,d)=0$ are guaranteed 
to give the intersection points of the two great circles 
as long as $a, b, c, d $ are not on the same great circle.

\end{document}